\newtheorem{thm}{Theorem}[section]
\newtheorem{lem}[thm]{Lemma}
\theoremstyle{definition}
\theoremstyle{remark}
\newcommand{\R}{\mathbb{R}}
\newcommand{\C}{\mathbb{C}}
\newcommand{\Z}{\mathbb{Z}}
\newcommand{\N}{\mathbb{N}}
\newcommand{\Do}{\mathcal{D}}
\DeclareMathOperator*{\im}{Im}%
\begin{document}


\title{Toeplitz Operators Acting on True-Poly-Bergman Type Spaces of the 
	Two-Dimensional Siegel Domain: Nilpotent Symbols.}

\author
{Yessica Hern\'andez-Eliseo \\
	Universidad Veracruzana, M\'exico \\
E-mail: yeherel@gmail.com \\ \\
%
	Josu\'e Ram\'irez-Ortega \\ 
	Universidad Veracruzana, M\'exico \\
E-mail: josramirez@uv.mx \\ \\
%
	Francisco G. Hern\'andez-Zamora \\ 
	Universidad Veracruzana, M\'exico \\
E-mail: francishernandez@uv.mx
}

%

\date{November 06, 2020}

	\maketitle
	\begin{abstract} 
		We describe certain $C^*$-algebras generated by Toeplitz operators with nilpotent symbols and acting on a poly-Bergman type space of the Siegel domain $D_{2} \subset \mathbb{C}^{2}$.  
		Bounded measurable functions of  the form $c(\text{Im}\, \zeta_{1}, \text{Im}\, \zeta_{2} - |\zeta_1|^{2})$ are called nilpotent symbols.
		In this work we consider symbols of the form
		$a(\text{Im}\, \zeta_1) b(\text{Im}\, \zeta_2 -|\zeta_1|^{2})$, where both limits 
		$\lim\limits_{s\rightarrow 0^+} b(s)$ and  $\lim\limits_{s\rightarrow +\infty} b(s)$ exist, 
		and $a(s)$ belongs to the 
		set of piece-wise continuous functions on $\overline{\mathbb{R}}=[-\infty,+\infty]$ and 
		having one-side limit values at each point of a finite set $D\subset \mathbb{R}$.
		We prove that the $C^*$-algebra generated by all Toeplitz operators $T_{ab}$ is 
		isomorphic to $C(\overline{\Pi})$, where $\overline{\Pi}=\overline{\R} \times \overline{\R}_+$
		and $\overline{\mathbb{R}}_+=[0,+\infty]$.
	\end{abstract}


	\section{Introduction.}
In the study of Toeplitz operators one of the common strategies consists in selecting a set of symbols 
$E \subset L^{\infty}$ in such a way that the algebra generated by Toeplitz operators with symbols in $ E $ can be described up to isomorphism, say, with an algebra of continuous functions or finding its spectrum.
In this paper, we study Toeplitz operators with nilpotent symbols and acting on 
a poly-Bergman type space of the Siegel domain $D_{2} \subset \mathbb{C}^{2}$. 
In \cite{QV-Ball1,QV-Ball2,LibroVasilevski}  the authors have fully described all commutative
$C^*$-algebras generated by Toeplitz operators with symbols invariant under the action
of a maximal Abelian subgroup of biholomorphisms and acting on the Bergman spaces of both the unit disk $\mathbb{D}$ and the Siegel domain  $D_n \subset \C^n$. 
For the unit disk, they discovered three families of symbols associated to 
commutative $C^*$-algebras of Toeplitz operators, while for the Siegel domain they found $n+2$ classes of symbols. 
Each class of symbols is invariant under the action of a maximal abelian group of biholomorphism.
Certainly, one can use these classes of symbols to  study Toeplitz operators acting on poly-Bergman type spaces
of the unit disk or the Siegel domain. 

Let $\Pi=\{z=x+iy \in \C \ : \  y>0\}$ be the upper half-plane. Toeplitz operators with vertical symbols, which depend  on $y=\im z$, and acting on  Bergman type  spaces  have been studied.
In \cite{Herrera,Herrera-Hutnik} the authors proved that the algebra generated by Toeplitz operators with vertical symbols and acting on the weighted  Bergman space  $\mathcal{A}_{\lambda}^{2}(\Pi)$ is isometrically isomorphic to the algebra  $VSO(\R_+)$ of all bounded functions that are very slowly oscillating on $\R_+$. 
Taking vertical symbols 
having limit values at $y=0$ and $y=\infty$, in \cite{R-SN} the authors found that  $\overline{\R}_+=[0,+\infty]$ is the spectrum of the algebra generated by all Toeplitz operators on the true-poly Bergman space $\mathcal{A}^{2}_{(n)}(\Pi)$. Similar research was made for Toeplitz operators on poly-Bergman spaces with homogeneous symbols (\cite{R-LL,R-RM-H}). Other works about it were made  in \cite{Hutnik,Hutnik-2,R-RM-V},  where the authors  studied 
Toeplitz operators acting on $\mathcal{A}^{2}_{(n)}(\Pi)$ from the point of view of wavelet spaces.
On the other hand, in \cite{Esmeral-Max-sq,Esmeral-Vas} the authors studied Toeplitz operators
on the Fock space $F_1^2(\C)$ with radial and bounded horizontal symbols, they found that spectral functions are uniformly continuous with respect to an adequate metric. Taking horizontal symbols having one-side limits at $x=\pm \infty$,	in \cite{A-C-R-N} the authors studied Toeplitz operators acting on poly-Fock spaces $F_{k}^{2}(\C)$, they found the spectrum of the $C^*$-algebra generated by such Toeplitz operators. 
Even though the authors described the $C^*$-algebras generated by  all spectral functions,  the spectrum of the  algebras is not fully understood in some cases, for this reason additional conditions on the symbols are imposed.

In \cite{QV-Ball1, QV-Ball2} the authors made remarkable research on the study of Toeplitz operators acting
on the Bergman space of the Siegel domain $D_{n} \subset \mathbb{C}^n$. In particular, they studied
  the $C^*$-algebra ${\cal T}_{{\cal N}_n}$   generated by all Toeplitz operators with
bounded nilpotent symbols, which are functions of the form  
$a(\zeta)=a(\im \zeta_{1},...,\im \zeta_{n-1}, \im \zeta_{n} - |\zeta'|^{2})$,
where $\zeta'=(\zeta_1,...,\zeta_{n-1})$.
Let us denote this kind of symbols by ${\cal N}_n$.
Although ${\cal T}_{{\cal N}_n}$ is commutative, it is too large, so it is impossible to figure out what its spectrum is. 
In particular,  in  \cite{Armando-V} the authors described the algebra generated  by Toeplitz operators acting on the weighted Bergman space $\mathcal{A}^2_{\lambda}(D_3)$ over three-dimensional Siegel domain $D_3$  using nilpotent symbols of the form 
$c(y_2)g(\im \zeta_3- |(\zeta_1, \zeta_2)|^2)$.  

The main purpose of the paper is to find the spectrum of the algebra generated by Toeplitz operators acting on the true-poly-Bergman type space $\mathcal{A}^{2}_{(L)}(D_{2})$ over  two-dimensional Siegel domain $D_2$ by selecting a particular set of nilpotent symbols.
In this sense, we just consider nilpotent symbols of the form 
$a(\im \zeta_1)$ and $b(\im \zeta_2 -|\zeta_1|^{2})$.
This paper is organized as follows.
In Section \ref{identificacion} we recall how poly-Bergman type spaces are defined for the Siegel domain, and how
they can be identified with a $L^2$-space through a Bargmann type transform.
In Section \ref{OT} we introduce Toeplitz operators acting on $\mathcal{A}^{2}_{(L)}(D_{2})$  with  nilpotent symbols,
we show that such Toeplitz operators are unitary equivalent to multiplication operators. 
In Section \ref{seccion-sb} we take symbols of the form
$b(\im \zeta_2 -|\zeta_1|^{2})$  for which both limits $\lim\limits_{s\rightarrow 0^+} b(s)$
and  $\lim\limits_{s\rightarrow +\infty} b(s)$ exist,
it is proved that  the $C^*$-algebra generated by all Toeplitz operators $T_b$ is 
isomorphic to $C(\overline{\R}_+)$, where  $\overline{\R}_+=[0,+\infty]$ is the one-point compactification of $[0,+\infty)$.
In Section \ref{Alg-a} we take nilpotent symbols of the form
$a(\im \zeta_1)$, where $a \in C(\overline{\R})$ and $\overline{\R}=[-\infty, +\infty]$ is the two-point compactification
of $\R$, we prove that the $C^*$-algebra generated by all Toeplitz
operators $T_a$ is isomorphic to $C(\triangle)$, where 
$\triangle= \overline{\Pi} /( \overline{\R} \times \{+\infty\})$ and $\overline{\Pi}=\overline{\R} \times \overline{\R}_+$. 
In Section \ref{symb-discont} we  describe the $C^*$-algebra generated by all Toeplitz operators 
$T_d$, where $d(\im \zeta_1)\in PC(\overline{\R},D)$  and $PC(\overline{\R},D)$ is the set of all 
piece-wise continuous functions on $\overline{\R}$  having one-side limit values at each point of 
a finite set $D\subset \R$. Finally, in Section \ref{T-ab} we describe the $C^*$-algebra generated by all Toeplitz operators 
$T_{ab}=T_a T_b=T_b T_a$.

	
	\section{Poly-Bergman type spaces of the Siegel domain.}\label{identificacion}
	
In this section, we recall  some results obtained in  \cite{R-SN-D}, which are needed in our research about Toeplitz operators.
	Each $\zeta\in \mathbb{C}^n$ will be represented as an ordered pair
	$\zeta=(\zeta',\zeta_n)$, where $\zeta'=(\zeta_1,...,\zeta_{n-1}) \in \mathbb{C}^{n-1}$. Besides, the Euclidean norm function will
	be denoted by $|\cdot|$. The Siegel domain is defined by
	$$D_{n}=\{\zeta=(\zeta',\zeta_{n}) \in \C^{n-1} \times \C : \im \zeta_{n}- |\zeta'|^{2} >0\}.$$
	We will study Toeplitz operators acting on certain poly-Bergman type subspaces of 
  $L^{2}(D_{n}, d\mu_{\lambda})$,
	 where
	$d\mu_{\lambda}(\zeta)=(\im \zeta_{n}-|\zeta'|^{2})^{\lambda} d\mu (\zeta)$,  with $ \lambda >-1$,
	and $d\mu(\zeta)$ is the usual Lebesgue measure. 
	Once and for all, $L^2(X)$ means $L^2(X,dm)$, where $X$ is any subset of a Euclidean space and $dm$
	is the Lebesgue area measure on $X$.
	
	For each multi-index $L=(l_{1},...,l_{n}) \in \mathbb{N}^n$, 
	the poly-Bergman type space $\mathcal{A}^{2}_{\lambda L}(D_{n})$ is the closed subspace of $L^{2}(D_{n}, d\mu_{\lambda})$ 
	consisting of all $L$-analytic functions, that is, all functions $f(\zeta)$ satisfying the equations
	\begin{eqnarray*}
		\left( \frac{\partial}{\partial \overline{\zeta}_{m}}-2i\zeta_{m}\frac{\partial}{\partial \overline{\zeta}_{n}}  \right)^{l_{m}} f 
		&=&0, \;\; 1\leq m \leq n-1 \\
		\left( \frac{\partial}{\partial \overline{\zeta}_{n}} \right)^{l_{n}}f 
		&=&0.
	\end{eqnarray*}
	
	In particular, for  $L=(1,...,1)$, $\mathcal{A}^{2}_{\lambda L}(D_{n})$ is just the Bergman space.
	Likewise, the anti-poly-Bergman type  space $\widetilde{\mathcal{A}}^2_{\lambda L}(D_{n})$ is defined to be the complex conjugate
	of $\mathcal{A}^{2}_{\lambda L}(D_{n}).$ 
	Thus, we introduce  true-poly-Bergman type spaces as follows:
	\begin{eqnarray*}
		\mathcal{A}^{2}_{\lambda(L)}(D_{n}) &=& \mathcal{A}^{2}_{\lambda L}(D_{n}) \ominus 
		\left( \sum_{m=1}^{n} \mathcal{A}^{2}_{\lambda,L-e_{m}}(D_{n}) \right),\\
		\widetilde{\mathcal{A}}^{2}_{\lambda(L)}(D_{n}) &=& \widetilde{\mathcal{A}}^{2}_{\lambda L}(D_{n}) \ominus 
		\left( \sum_{m=1}^{n} \widetilde{\mathcal{A}}^{2}_{\lambda,L-e_{m}}(D_{n}) \right),
	\end{eqnarray*}
	where  $e_{m}=(0,...,1,...,0)$ and the $1$ is placed at the $m$-entry. We assume that 
	$\mathcal{A}^{2}_{\lambda(L)}(D_{n})=\{0 \}$ whenever $L\in \Z^n \setminus \N^n$.
	
	In \cite{R-SN-D} the authors proved that $L^{2}(D_{n},d\mu_{\lambda})$ 
	equals to the direct sum of all the true-poly-Bergman type spaces:
	$$L^{2}(D_{n}, d\mu_{\lambda})= 
	\left( \bigoplus_{L\in \mathbb{N}^{n} } \mathcal{A}^{2}_{\lambda(L)}(D_{n}) \right) \bigoplus 
	\left( \bigoplus_{L\in \mathbb{N}^{n} } \widetilde{\mathcal{A}}^{2}_{\lambda(L)}(D_{n}) \right).$$
	The authors also proved that $\mathcal{A}^{2}_{\lambda(L)}(D_{n})$  is isomorphic and isometric to the tensor product
	$$L^{2}(\R^{n-1})\otimes {\cal H}_{l_{1}-1}\otimes \cdot\cdot\cdot\otimes {\cal H}_{l_{n-1}-1} \otimes L^{2}(\R_{+}) \otimes {\cal L}_{l_{n}-1},$$ 
	where $\mathbb{R}_+=(0,\infty)$.  Both ${\cal H}_{m}$ and ${\cal L}_{m}$ are one-dimensional spaces defined below. 
	Recall the Hermite and Laguerre polynomials:
	$$H_m(y):= (-1)^m e^{y^2} \frac{d^m}{dy^m} (e^{-y^2}), \quad 
	L_m^{\lambda}(y):= e^y \frac{y^{-\lambda}}{m!}  \frac{d^m}{dy^m} (e^{-y}y^{m+\lambda})$$
	for $m=0,1,2,...$ Recall also the Hermite and Laguerre functions 
		$$h_m (y)= \frac{(-1)^m}{(2^n\sqrt{\pi}n!)^{1/2}} H_m(y) e^{-y^2/2}, \quad 
		\ell_m^{\lambda} (y)=(-1)^m c_m L_m^{\lambda}(y) e^{-y/2},$$
		where $c_m=\sqrt{m!/\Gamma(m+\lambda+1)}$ and $\Gamma$ is the usual Gamma function.
	It is well known that $\{h_m \}_{m=0}^{\infty}$ and $\{\ell_m^{\lambda} \}_{m=0}^{\infty}$ are orthonormal
	bases for $L^2(\R)$ and $L^2(\mathbb{R}_+, y^{\lambda} dy)$, respectively. 
	Finally, ${\cal H}_m=\text{span}\{h_m\}$ and ${\cal L}_m=\text{span}\{\ell_m^{\lambda}\}$.
			
 In this work we restrict ourselves to the study of Toeplitz operators acting 
	on the true-poly-Bergman type spaces over two-dimensional Siegel domain $D_{2}$ 
	with the Lebesgue measure $d\mu$ ($\lambda=0$). Henceforth,
	the space $\mathcal{A}^{2}_{0(L)}(D_{2})$  will be simply denoted by $\mathcal{A}^{2}_{(L)}(D_{2})$;
	similarly, $\ell_m(y)$ and $L_m(y)$ stand for $\ell_m^0(y)$ and $L_m^0(y)$, respectively.
	The true-poly-Bergman type space $\mathcal{A}^{2}_{(L)}(D_{2})$ can be identified with $L^{2}(\R \times \R_{+})$ through 
	a Bargmann type transform (\cite{R-SN-D}), such identification fits to the study of Toeplitz operators with nilpotent symbols. 
	Several operators are needed to define such identification.  To begin with,
	we introduce the flat domain $\Do=\C \times \Pi $, where $\Pi= \R \times \R_{+} \subset \C$. 
	Then $\Do$ can be identified with  $D_{2}$ using  the mapping
	$$\kappa: \Do \ni w=(w_{1},w_{2}) \longmapsto \zeta=(w_{1}, w_{2}+i|w_{1}|^{2}) \in D_2.$$

Thus we have the unitary operator
	$U_{0}: L^{2}(D_{2}, d\mu) \longrightarrow L^{2}(\Do, d\eta)$
	given by
	$$ (U_{0}f)(w)=f(\kappa(w)),$$
	where $d\eta(w)= d\mu(w)$.  Take 
	$w=(w_{1}, w_{2}) \in \C \times \Pi$, with $w_{m}=u_m +i v_m$ and  $m=1,2$. We identify 
	$w=(u_1+ iv_1, u_2+iv_2)$ with $(u_1,v_1, u_2,v_2)$. Then
	$$L^{2}(\Do, d\eta)= L^{2}(\R, du_1)\otimes L^{2}(\R,  dv_{1}) \otimes L^{2}(\R, du_2)\otimes L^{2}(\R_{+}, dv_2).$$
	
Introduce
	$$U_{1}= F\otimes I \otimes F \otimes I,$$
	where $F$ is the Fourier transform acting on $L^{2}(\R)$ by the rule
	$$(Fg)(t)=\frac{1}{\sqrt{2 \pi}} \int_{-\infty}^{\infty} g(x) e^{-itx} dx.$$
	
	Consider now the following two mappings acting on $\Do$:
	$$\psi_1: \xi=(\xi_{1}, t_{2} +i   s_2) \longmapsto w=(\xi_1, t_{2}+ i\frac{s_{2}}{2|t_{2}|}),$$
	$$\psi_2: z=(x_1+iy_1, z_2) \longmapsto \xi= \left(\sqrt{|x_{2}|}(x_{1}+y_{1})+ i \frac{1}{2\sqrt{|x_{2}|}}(-x_{1}+y_{1}), z_{2}\right),$$
	where $\xi=(\xi_1,\xi_2)$, $z=(z_1,z_2) \in \Do$,  $\xi_m=t_m+is_m$  and $z_m=x_m+iy_m$.
	Both functions $\psi_1$ and $\psi_2$ lead to the following unitary operators
	acting on $L^{2}(\Do, d\eta)$:
	$$(V_1f)(\xi) = \frac{1}{(2|t_{2}|)^{ 1/2}} f(\psi_1(\xi)), \quad (V_{2}g)(z)= g(\psi_2(z)) .$$

	Henceforth  $L=(l_{1}, l_{2})=(j,k)$.
	
	\begin{thm}[\cite{R-SN-D}]\label{puro-iso}
		The operator $U=V_{2}V_{1}U_{1}U_{0}$ 
		is unitary, and maps
		$L^{2}(D_{2}, d\mu)$ onto the space
		$$L^{2}(\Do, d\eta)= L^{2}(\R,dx_1)\otimes L^{2}(\R,dy_1) \otimes L^{2}(\R,dx_2)\otimes L^{2}(\R_{+},dy_2).$$
		For each $L=(j,k) \in \N^{2}$, the operator $U$ restricted to $\mathcal{A}^{2}_{(L)}(D_{2})$ is an isometric isomorphism onto
		the space
		$$\mathcal{H}_{(L)}^{+}= L^{2}(\R) \otimes \text{span} \{h_{j-1}(y_1)\}  \otimes L^{2}(\R_{+}) \otimes \text{span} \{\ell_{k-1}(y_2)\}. $$ 
	\end{thm}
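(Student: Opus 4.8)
The plan is to establish the two assertions in turn: that $U$ is unitary with the stated image, and that it intertwines the $L$-analyticity equations with membership in $\mathcal{H}^+_{(L)}$. Since the general decomposition of $\mathcal{A}^2_{\lambda(L)}(D_n)$ is already available from \cite{R-SN-D}, the substantive task is to verify that the explicit composition $U=V_2V_1U_1U_0$ realizes the case $n=2$, $\lambda=0$.

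First I would check unitarity factor by factor. The map $\kappa$ is a volume-preserving diffeomorphism of $\Do$ onto $D_2$: in real coordinates only $\im\zeta_2$ is translated, by $|w_1|^2$, so its Jacobian is unipotent with determinant $1$ and $U_0$ is a unitary identifying $L^2(D_2,d\mu)$ with $L^2(\R,du_1)\otimes L^2(\R,dv_1)\otimes L^2(\R,du_2)\otimes L^2(\R_+,dv_2)$. The operator $U_1=F\otimes I\otimes F\otimes I$ is unitary because $F$ is unitary on $L^2(\R)$ and tensor products of unitaries are unitary. For $V_1$ and $V_2$ the weight factors $(2|t_2|)^{-1/2}$ and $1$ are precisely the square roots of the Jacobians of $\psi_1$ and $\psi_2$, so a change-of-variables computation shows each preserves the norm; composing the four factors yields the unitary $U$ onto the target in the relabeled coordinates $(x_1,y_1,x_2,y_2)$.

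For the second assertion I would transport the two defining operators through each factor. A chain-rule computation gives $U_0(\partial_{\bar\zeta_1}-2i\zeta_1\partial_{\bar\zeta_2})U_0^{-1}=\partial_{\bar w_1}-iw_1\partial_{u_2}$ and $U_0\,\partial_{\bar\zeta_2}\,U_0^{-1}=\partial_{\bar w_2}$. Applying $U_1$ turns $\partial_{u_2}$ into multiplication by $it_2$ and, in the first factor, converts $\partial_{u_1}$ and multiplication by $u_1$ into multiplication by $it_1$ and the operator $i\partial_{t_1}$; after the rescaling $V_2$, which absorbs the dependence on $t_2$, the first operator collapses to the harmonic-oscillator lowering operator $\frac{d}{dy_1}+y_1$ (up to constants) in the single variable $y_1$, whose $j$-fold kernel is $\text{span}\{h_0,\dots,h_{j-1}\}$. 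The second operator becomes, on the Fourier side, $t_2+\partial_{v_2}$, whose $k$-fold kernel is spanned by $v_2^m e^{-t_2v_2}$ for $0\le m\le k-1$; the rescaling $V_1$ sends these to $\text{span}\{\ell_0,\dots,\ell_{k-1}\}$ in the variable $y_2$. Note that square-integrability of $v_2^m e^{-t_2v_2}$ on $\R_+$ forces $t_2>0$, which is the origin of both the $L^2(\R_+)$ factor and the superscript in $\mathcal{H}^+_{(L)}$ (the anti-poly-Bergman space corresponds to $t_2<0$).

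Combining these, $U$ carries $\mathcal{A}^2_L(D_2)$ onto $L^2(\R)\otimes\text{span}\{h_0,\dots,h_{j-1}\}\otimes L^2(\R_+)\otimes\text{span}\{\ell_0,\dots,\ell_{k-1}\}$, the two unconstrained variables contributing the full $L^2$-factors. Passing to the true-poly space through the orthogonal differences in the definition of $\mathcal{A}^2_{(L)}(D_2)$, and using the orthonormality of $\{h_m\}$ and $\{\ell_m\}$, deletes all but the top indices and leaves exactly $\mathcal{H}^+_{(L)}$. I expect the main obstacle to be the middle step: carrying the variable-coefficient operator $\partial_{\bar w_1}-iw_1\partial_{u_2}$ through the Fourier transform and the two rescalings and recognizing the outcome as the oscillator ladder operator. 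This is where the precise form of $V_1,V_2$ --- in particular the factor $(2|t_2|)^{-1/2}$ --- is indispensable and where the four real variables must be tracked with care; the Laguerre computation is parallel but requires pinning down the correct first-order ODE on $\R_+$ whose iterated kernel produces the Laguerre functions.
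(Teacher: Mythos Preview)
The theorem is quoted from \cite{R-SN-D} and the present paper gives no proof of it; there is nothing here to compare your proposal against. That said, your sketch is a faithful outline of the standard argument and presumably of the one in the cited reference: factor-by-factor unitarity via Jacobian bookkeeping, followed by transporting the two $L$-analyticity operators through $U_0,U_1,V_1,V_2$ until they become the Hermite and Laguerre lowering operators in the variables $y_1,y_2$. Your chain-rule identity $U_0(\partial_{\bar\zeta_1}-2i\zeta_1\partial_{\bar\zeta_2})U_0^{-1}=\partial_{\bar w_1}-iw_1\partial_{u_2}$ checks out, as does the observation that square-integrability on $\R_+$ forces $t_2>0$. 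The one place I would tighten the wording is the Laguerre step: after $V_1$ the functions $v_2^{m}e^{-t_2v_2}$ become (up to constants) $y_2^{m}e^{-y_2/2}$, and you need the elementary fact that $\mathrm{span}\{y_2^{m}e^{-y_2/2}:0\le m\le k-1\}=\mathrm{span}\{\ell_0,\dots,\ell_{k-1}\}$, which follows from the degree filtration of the Laguerre polynomials; as written, ``$V_1$ sends these to $\mathrm{span}\{\ell_0,\dots,\ell_{k-1}\}$'' elides this small but necessary linear-algebra step.
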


	\vspace{.3cm}
	Introduce the isometric linear embedding
	$R_{0(L)}: L^{2}(\R \times \R_{+}) \longrightarrow L^{2}(\Do)$
	defined  by
	\begin{equation*}
		(R_{0(L)}g)(x_1,y_1,x_2,y_2)= \chi_{\R_{+}}(x_2)\, g(x_{1}, x_{2}) \, h_{j-1}(y_{1}) \ell_{k-1}(y_{2}).
	\end{equation*}
	Of course $\mathcal{H}_{(L)}^{+}$ is the range of $R_{0(L)}$, and it is also
	the image of $\mathcal{A}^{2}_{(L)}(D_{2})$ under $U$. Thus, the operator
	$$R_{(L)}=R_{0(L)}^{*}U: L^{2}(D_{2}) \longrightarrow L^{2}(\R \times \R_{+}),$$
	isometrically maps the true-poly-Bergman type space $\mathcal{A}^{2}_{(L)}(D_{2})$ onto  $L^{2}(\R \times \R_{+})$.
	Therefore, $R_{(L)}R_{(L)}^{*}=I$ and $R_{(L)}^{*}R_{(L)}=B_{(L)}$, where
	$B_{(L)}$ is the orthogonal projection from $L^{2}(D_{2})$ onto $\mathcal{A}^{2}_{(L)}(D_{2})$.
	In addition, the operator $R_{(L)}^{*}=U^*R_{0(L)}$ plays the role of the Segal-Bargmann transform 
	for the true-poly-Bergman type space $\mathcal{A}^{2}_{(L)}(D_{2})$,
	where the adjoint operator
	$R_{0(L)}^{*}: L^{2}(\Do) \longrightarrow L^{2}(\R \times \R_{+})$
	is given by
	\begin{equation*}
	(R_{0(L)}^*f)(x_{1}, x_{2})
	= 
	\int_{\R}\int_{\R_{+}} h_{j-1}(y_{1}) \ell_{k-1}(y_{2}) f(x_{1},y_{1}, x_{2}, y_{2})  dy_{2} dy_{1},
	\end{equation*}
	with $(x_1,x_2)\in \mathbb{R} \times \mathbb{R}_+$.

	
	\section{Toeplitz operators with nilpotent symbols.} \label{OT}
	
	In this section, we study Toeplitz operators with nilpotent symbols and acting on the true-poly-Bergman type space
	$\mathcal{A}^{2}_{(L)}(D_{2})$. In \cite{LibroVasilevski} the author  has widely developed the theory of Toeplitz operators
	on the Bergman spaces, and the author's techniques can be applied to the study of Toeplitz operators
	acting on $\mathcal{A}^{2}_{(L)}(D_{2})$. To begin with, a function 
	$c \in L^{\infty}(D_{2}, d\mu)$ is said to be  a nilpotent symbol if it has the form 
	$c(\zeta_{1}, \zeta_{2})=c(\im \zeta_{1},\im \zeta_{2}- |\zeta_{1}|^{2})$. 
	Then the Toeplitz operator acting on $\mathcal{A}^{2}_{(L)}(D_{2})$,
	with  nilpotent symbol $c(\zeta)$, is defined by
	$$(T_cf)(\zeta)=(B_{(L)}(cf))(\zeta),$$
	where $B_{(L)}$ is the orthogonal projection from $L^{2}(D_{2})$ onto $\mathcal{A}^{2}_{(L)}(D_{2})$.
	The  Bargmann-type operator  $R_{(L)}$ identifies the space  $\mathcal{A}^{2}_{(L)}(D_{2})$ with
	$L^{2}(\R\times \R_{+})$, and it fits properly in the study of the Toeplitz operator $T_c$.

	\begin{thm}\label{equiv-unitaria}
		Let $c$ be a nilpotent symbol.
		Then the Toeplitz operator $T_{c}$ is unitary equivalent to the multiplication operator $\gamma^{c}I=R_{(L)}T_cR_{(L)}^*$,  
		where $\gamma^{c} :\R \times \R_{+} \rightarrow \C$ is given by
		\begin{equation}\label{gamma-c}
	\gamma^{c}(x_1,x_2)=
\int_{\R}\int_{\R_{+}} c\left(\frac{-x_{1}+y_{1}}{2\sqrt{x_{2}}} ,\frac{y_{2}}{2x_{2}}\right) (h_{j-1}(y_{1}))^{2}
(\ell_{k-1}(y_{2}))^{2}dy_{2}dy_{1}.
		\end{equation}
	\end{thm}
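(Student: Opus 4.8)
The plan is to strip the Toeplitz compression down to a bare multiplication operator and then evaluate the resulting kernel. First I would note that $T_cf=B_{(L)}(cf)=B_{(L)}M_cf$, where $M_c$ denotes multiplication by the symbol; since $R_{(L)}R_{(L)}^*=I$ we have $R_{(L)}B_{(L)}=R_{(L)}R_{(L)}^*R_{(L)}=R_{(L)}$, so the projection disappears:
\[
R_{(L)}T_cR_{(L)}^* = R_{(L)}M_cR_{(L)}^* = R_{0(L)}^*\big(UM_cU^*\big)R_{0(L)}.
\]
Everything therefore reduces to identifying $UM_cU^*$ and then carrying out the $R_{0(L)}$-pairing.

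For the conjugation $UM_cU^*=V_2V_1U_1U_0\,M_c\,U_0^*U_1^*V_1^*V_2^*$ I would track the symbol through each factor. Since $\kappa(w)=(w_1,w_2+i|w_1|^2)$ gives $\im\zeta_1=v_1$ and $\im\zeta_2-|\zeta_1|^2=v_2$, conjugation by $U_0$ turns $M_c$ into multiplication by $c(v_1,v_2)$ in the coordinates $w=(u_1,v_1,u_2,v_2)$. Because $U_1=F\otimes I\otimes F\otimes I$ acts only in the $u_1,u_2$ variables while $c(v_1,v_2)$ is independent of them, the two operators commute and $U_1M_{c(v_1,v_2)}U_1^*=M_{c(v_1,v_2)}$. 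For each weighted composition operator of the form $(Vf)(\xi)=J(\xi)f(\psi(\xi))$ one has the elementary identity $VM_\phi V^{-1}=M_{\phi\circ\psi}$, in which the weight $J$ cancels; applying this to $V_1$ and $V_2$ amounts to substituting the imaginary-part coordinates prescribed by $\psi_1$ and $\psi_2$. Reading $\im\xi_1$ and $\im\xi_2/(2|\re\xi_2|)$ off $\psi_1$ replaces $(v_1,v_2)$ by $(s_1,\,s_2/(2|t_2|))$, and then $\psi_2$ (for which $s_1=(-x_1+y_1)/(2\sqrt{|x_2|})$, $s_2=y_2$, $t_2=x_2$) yields
\[
UM_cU^* = M_{\tilde c},\qquad \tilde c(x_1,y_1,x_2,y_2)=c\!\left(\frac{-x_1+y_1}{2\sqrt{|x_2|}},\ \frac{y_2}{2|x_2|}\right).
\]

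Finally I would substitute $M_{\tilde c}$ into $R_{0(L)}^*M_{\tilde c}R_{0(L)}$. For $g\in L^2(\R\times\R_+)$ the embedding gives $(R_{0(L)}g)(x_1,y_1,x_2,y_2)=\chi_{\R_+}(x_2)g(x_1,x_2)h_{j-1}(y_1)\ell_{k-1}(y_2)$; multiplying by $\tilde c$ and then pairing against $h_{j-1}(y_1)\ell_{k-1}(y_2)$ through $R_{0(L)}^*$ factors $g(x_1,x_2)$ out of the $(y_1,y_2)$-integral (on the domain $x_2>0$, so $\chi_{\R_+}(x_2)=1$ and $|x_2|=x_2$), leaving exactly $\gamma^c(x_1,x_2)g(x_1,x_2)$ with $\gamma^c$ as in \eqref{gamma-c}. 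Since by Theorem \ref{puro-iso} the operator $R_{(L)}$ restricts to an isometric isomorphism of $\mathcal{A}^2_{(L)}(D_2)$ onto $L^2(\R\times\R_+)$, this proves that $T_c$ is unitarily equivalent to $\gamma^cI$.

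The delicate point throughout is the bookkeeping of real versus imaginary parts across the Fourier step $U_1$ and the two composition operators $V_1,V_2$: one has to be sure that after the partial Fourier transforms the symbol still depends only on the untouched variables $v_1,v_2$, and then insert the correct imaginary-part substitutions from $\psi_1$ and $\psi_2$ — verifying along the way that the $(2|t_2|)^{-1/2}$ weight in $V_1$ genuinely cancels under conjugation — so that the arguments of $c$ emerge as $(-x_1+y_1)/(2\sqrt{x_2})$ and $y_2/(2x_2)$.
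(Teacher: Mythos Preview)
Your argument is correct and follows the same route as the paper: eliminate $B_{(L)}$ via $R_{(L)}R_{(L)}^*=I$, conjugate the multiplication operator through $U=V_2V_1U_1U_0$ one factor at a time to obtain $M_{\tilde c}$ with $\tilde c(x_1,y_1,x_2,y_2)=c\big((-x_1+y_1)/(2\sqrt{|x_2|}),\,y_2/(2|x_2|)\big)$, and then compress with $R_{0(L)}^*,R_{0(L)}$ to read off $\gamma^c$. Your added remarks on the weighted-composition identity and the reason the $|x_2|$ becomes $x_2$ are fine elaborations, but the structure is identical to the paper's proof.
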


	\begin{proof}
		We have
		\begin{eqnarray*}
			R_{(L)}T_cR_{(L)}^*
			&=& R_{(L)}B_{(L)} (cI) R_{(L)}^*\\
			&=& R_{(L)}R_{(L)}^{*}R_{(L)} (cI) R_{(L)}^*\\
			&=&R_{(L)} (cI) R_{(L)}^*\\
			&=&R_{0(L)}^{*}V_{2}V_{1}U_{1}U_{0} (cI) U_0^{-1}U_1^{-1}V_{1}^{-1}V_{2}^{-1} R_{0(L)}.
		\end{eqnarray*}

	Recall that  $\zeta=\kappa(w)=(w_{1}, w_2 + i |w_{1}|^{2})$, where $w=(w_{1}, w_2) \in \Do$ and $w_m=u_m +iv_m$.
		For $g \in L^{2}(\Do)$,
		$$(U_{0} (cI) U_{0}^{-1}g)(w)= c(\kappa(w))(U_{0}^{-1}g)(\kappa(w))= c(\kappa(w))g(w).$$
		That is, $U_{0} (cI) U_{0}^{-1}=c(\kappa(w)) I$, where 
		$c(\kappa(w))=c(v_1,v_2)$.
		It is easy to see that
		$U_{1} (c(v_1,v_2)I) U_{1}^{-1}= c(v_1,v_2)I,$
		$$V_{1}(cI)V_{1}^{-1}= c(s_{1}, \frac{s_{2}}{2|t_{2}|})I$$ 
		and
		$$V_{2}V_1(cI) V_1^{-1} V_{2}^{-1} = c\left( \frac{-x_{1}+y_{1}}{2\sqrt{|x_{2}|}} ,\frac{y_{2}}{2|x_{2}|}\right) I.$$
		Thus
		\begin{eqnarray*}
			R_{(L)}T_cR_{(L)}^*&=& R_{0(L)}^{*}c\left( \frac{-x_{1}+y_{1}}{2\sqrt{|x_{2}|}}  ,\frac{y_{2}}{2|x_{2}|}\right) I R_{0(L)}\\
			&=&\gamma^{c}(x_{1}, x_{2}) I, 
		\end{eqnarray*}
		where $\gamma^{c}(x_1,x_2)$ is given in (\ref{gamma-c}).
	\end{proof}

 By Theorem \ref{equiv-unitaria},  the $C^*$-algebra generated by all Toeplitz operators $T_c$ is commutative (See \cite{QV-Ball1, LibroVasilevski}), but  its spectrum is difficult to figure out what it is.
For this reason, we assume certain continuity conditions on the nilpotent symbols in order to describe the spectrum of the subalgebra generated 
by the Toeplitz operators.  We will split our research into two cases concerning the symbols.
Firstly, we study Toeplitz operators with symbols of the form
	$b(\im \zeta_{2}- |\zeta_{1}|^{2})$, for which
	\begin{equation}\label{sim-b}
	\gamma^{b}(x_{1}, x_{2})= \gamma^{b}( x_{2})= \int_{\R_{+}} b\left(\frac{y_{2}}{2x_{2}}\right) (\ell_{k-1} (y_{2}))^{2}  dy_{2}.
	\end{equation}
	Secondly, we analyze Toeplitz operators with symbols of the form	$a(\im \zeta_{1})$,
	for which
	\begin{equation}\label{sim-a}
		\gamma^{a}(x_{1}, x_{2}) = \int_{\R} a\left( \frac{-x_{1}+y_{1}}{2\sqrt{x_{2}}} \right)  (h_{j-1}(y_{1}))^{2} dy_{1}.\\
	\end{equation}
As mentioned above,
the $C^*$-algebra generated by all Toeplitz operators $T_a$ is still complicated to be fully described despite its commutative property.
	 Fortunately, the $C^*$-algebra generated by all operators $T_a$ can be described when the symbols
	  $a$ are taken to be continuous on $\overline{\mathbb{R}}=[-\infty,+\infty]$, where  $\overline{\R}$ is the two-point compactification of $\R$.   
	  Even more, the $C^*$-algebra of Toeplitz operators $T_a$ can be still described for symbols 
	  having finitely many jump discontinuities, as shown in Section \ref{symb-discont}. Finally, we analyze Toeplitz operators with symbols of the form $c(\zeta_{1}, \zeta_{2})=a(\im \zeta_{1})b(\im \zeta_{2}- |\zeta_{1}|^{2})$.

	
	\section{Toeplitz operators with symbols $b(\im \zeta_{2}- |\zeta_{1}|^{2})$.}\label{seccion-sb}
	
	In this section, we study the $C^*$-algebra generated by all Toeplitz operators $T_b$ with symbols of the form $b(\im \zeta_{2}- |\zeta_{1}|^{2})$, where $b(y)$ has limit values at $y=0,+\infty$.
Under this continuity condition,  we will see that $\gamma^{b}$  is continuous on 
	$\overline{\Pi}:=\overline{\R} \times \overline{\R}_+$, where  
	$\overline{\R}_+= [0, +\infty]$ is the two-point compactification of $\R_+=(0,+\infty)$.
	Apply the change of variable $y_2 \mapsto 2x_2y_2$ in the integral representation of $\gamma^{b}$, 
	then
	$$\gamma^{b}(x_{1}, x_{2})=\gamma^{b}(x_{2})= 2x_{2} \int_{\R_{+}} b(y_{2})   (\ell_{k-1} (2x_{2}y_{2}))^{2}  dy_{2}.$$
	Actually $\gamma^b$ depends only  on the variable $x_2$, and is continuous on $\R_+$ because of the
	continuity of $\ell_{k-1}(y)$  and the Lebesgue dominated convergence theorem.

	Let $L^{\infty}_{\{0, +\infty\}}(\R_{+})$ denote the subspace of $L^{\infty}(\R_{+})$ consisting of all functions having limit values at
 $0$ and $+\infty$.
	For $b \in L^{\infty}_{\{0, +\infty\}}(\R_{+})$, define
	\begin{equation*}
		b_{0}:=\lim_{y\rightarrow 0^{+}} b(y), \quad b_{\infty}:=\lim_{y\rightarrow +\infty} b(y).
	\end{equation*}

It is worth mentioning that $\gamma^b$ was obtained in \cite{R-SN} as the spectral function of a Toeplitz operator
acting on a true-poly-Bergman space of the upper half-plane.  Thus, we have at least two scenarios in which $\gamma^b$ appears
as a spectral function. 

\begin{lem}[\cite{R-SN}]\label{lim-sym-b}
	Let $b \in L^{\infty}_{\{0, +\infty\}}(\R_{+})$. 
	Then the spectral function $\gamma^{b}$ satisfies
	$$b_{\infty}=\lim_{x_{2}\rightarrow 0^{+}} \gamma^{b}(x_{2}), \quad b_{0}=\lim_{x_{2}\rightarrow + \infty} \gamma^{b}(x_{2}).$$
\end{lem}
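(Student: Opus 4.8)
The plan is to read off both limits directly from the integral representation (\ref{sim-b}) of $\gamma^b$ by a dominated-convergence argument, the only structural input being that the squared Laguerre function integrates to $1$. First I would record that, since the choice $\lambda=0$ forces $c_{k-1}=1$ and $\{\ell_m\}_{m=0}^{\infty}$ is an orthonormal basis of $L^2(\R_+)$, one has $\int_{\R_+}(\ell_{k-1}(y_2))^2\,dy_2=1$. Writing (\ref{sim-b}) in the form
$$\gamma^{b}(x_2)=\int_{\R_+} b\!\left(\frac{y_2}{2x_2}\right)(\ell_{k-1}(y_2))^2\,dy_2,$$
this normalization exhibits $\gamma^b(x_2)$ as an average of the values of $b$ against the probability density $(\ell_{k-1})^2$, in which the argument $y_2/(2x_2)$ sweeps either to $+\infty$ or to $0^+$ according to how $x_2$ degenerates.

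For the first limit I would fix an arbitrary sequence $x_2^{(n)}\to 0^+$. For each fixed $y_2>0$ the argument $y_2/(2x_2^{(n)})\to+\infty$, so by the hypothesis $b\in L^{\infty}_{\{0,+\infty\}}(\R_+)$ the integrand converges pointwise to $b_\infty(\ell_{k-1}(y_2))^2$. Since $\bigl|b(y_2/(2x_2^{(n)}))(\ell_{k-1}(y_2))^2\bigr|\le \|b\|_\infty(\ell_{k-1}(y_2))^2$ and the majorant is integrable, the dominated convergence theorem gives $\gamma^{b}(x_2^{(n)})\to b_\infty\int_{\R_+}(\ell_{k-1}(y_2))^2\,dy_2=b_\infty$. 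As the limit is independent of the chosen sequence, $\lim_{x_2\to 0^+}\gamma^{b}(x_2)=b_\infty$. The second limit is entirely symmetric: for $x_2\to+\infty$ one has $y_2/(2x_2)\to 0^+$, the integrand tends to $b_0(\ell_{k-1}(y_2))^2$, and the same majorant yields $\lim_{x_2\to+\infty}\gamma^{b}(x_2)=b_0$.

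There is no genuine obstacle here; the proof is a clean application of dominated convergence. The only two points requiring a line of care are the normalization $\int_{\R_+}(\ell_{k-1}(y_2))^2\,dy_2=1$, which is precisely what produces the bare constants $b_0$ and $b_\infty$ rather than these constants multiplied by some spurious factor, and the passage from the sequential statement of dominated convergence to the continuous-parameter limit, handled as above by observing that the common value is attained along every sequence $x_2^{(n)}$ approaching $0^+$ (respectively $+\infty$).
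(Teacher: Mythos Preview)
Your argument is correct: the dominated convergence theorem applies cleanly, with the integrable majorant $\|b\|_\infty(\ell_{k-1})^2$ and the normalization $\int_{\R_+}(\ell_{k-1})^2=1$ doing all the work. Note, however, that the paper does not actually supply a proof of this lemma; it is quoted from \cite{R-SN} and used as a black box, so there is no in-paper argument to compare against. Your self-contained derivation is exactly the kind of direct proof one would expect in the cited reference, and nothing more elaborate is needed.
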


According to Lemma \ref{lim-sym-b} and Theorem 4.8 in \cite{R-SN} we have the following 

\begin{thm}
For  $b\in L^{\infty}_{\{0, +\infty\}}(\R_{+})$, the spectral function $\gamma^{b}(x_{2}) $ is continuous on $\overline{\R}=[0, +\infty]$.	
	The $C^{*}$-algebra generated by all functions $\gamma^{b}$, with $b\in L^{\infty}_{\{0, +\infty\}}(\R_{+})$,  is isomorphic 
	and isometric to the algebra  $C[0, \infty]$. That is, the  $C^*$-algebra generated by all Toeplitz operators $T_b$,
	with $b(\im \zeta_{2}- |\zeta_{1}|^{2}) \in L^{\infty}_{\{0, +\infty\}}(\R_{+})$,
	 is isomorphic to $C[0, \infty]$, where the  isomorphism is defined on the generators by 
	$$T_b  \longmapsto \gamma^b.$$
\end{thm}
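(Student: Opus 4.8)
The plan is to transport everything through the unitary equivalence of Theorem \ref{equiv-unitaria}, which already identifies each $T_b$ with the multiplication operator $\gamma^b I$ on $L^2(\R\times\R_+)$. Since by (\ref{sim-b}) the spectral function $\gamma^b$ depends only on $x_2$, the whole problem collapses to understanding the commutative $C^*$-subalgebra of $L^\infty(\R_+)$ generated by the functions $\{\gamma^b : b\in L^\infty_{\{0,+\infty\}}(\R_+)\}$, together with the observation that the passage $T_b\mapsto\gamma^b$ is an isometric $*$-homomorphism.

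First I would settle the continuity claim. Continuity of $\gamma^b$ on the open half-line $(0,\infty)$ has already been recorded (dominated convergence together with the continuity of $\ell_{k-1}$), so only the two endpoints remain. Here Lemma \ref{lim-sym-b} does the work: it furnishes the finite one-sided limits $\lim_{x_2\to 0^+}\gamma^b(x_2)=b_\infty$ and $\lim_{x_2\to+\infty}\gamma^b(x_2)=b_0$, so $\gamma^b$ extends continuously to the two-point compactification $\overline{\R}_+=[0,+\infty]$; that is, $\gamma^b\in C[0,\infty]$.

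Next I would check that $T_b\mapsto\gamma^b$ is a well-defined isometric $*$-isomorphism onto its image. Linearity in $b$ is immediate from (\ref{sim-b}), and since the weight $(\ell_{k-1})^2$ is real and nonnegative one has $\overline{\gamma^b}=\gamma^{\bar b}$, so the correspondence respects adjoints. Because the operators $\gamma^b I$ are mutually commuting multiplications, products of Toeplitz operators correspond to products of spectral functions, and $\|T_b\|=\|\gamma^b I\|=\|\gamma^b\|_\infty$, which by the continuity just established equals the supremum norm of $\gamma^b$ on the compact set $[0,\infty]$. Hence the $C^*$-algebra generated by the $T_b$ is isometrically $*$-isomorphic to the $C^*$-subalgebra $\mathcal{B}\subseteq C[0,\infty]$ generated by the $\gamma^b$, with the isomorphism acting as $T_b\mapsto\gamma^b$.

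The remaining, and genuinely substantive, step is to prove $\mathcal{B}=C[0,\infty]$; this is where Theorem 4.8 of \cite{R-SN} is invoked, and where I expect the main obstacle to lie. By Stone--Weierstrass it suffices to verify that $\mathcal{B}$ contains the constants, is closed under conjugation, and separates the points of $[0,\infty]$. The first holds since $b\equiv 1$ gives $\gamma^1(x_2)=\int_{\R_+}(\ell_{k-1}(y_2))^2\,dy_2=1$, the unit vector $\ell_{k-1}$ having norm one; the second is the identity $\overline{\gamma^b}=\gamma^{\bar b}$ noted above. Point separation is the delicate part: writing $\gamma^b(x_2)=\int_{\R_+}b(s)\,d\nu_{x_2}(s)$ with the probability densities $d\nu_{x_2}(s)=2x_2(\ell_{k-1}(2x_2 s))^2\,ds$, distinct interior values $x_2\neq x_2'$ produce genuinely distinct dilations of the fixed profile $(\ell_{k-1})^2$, hence distinct measures, which are then separated by a suitable $b$ (for instance an indicator of an interval); the endpoints $0$ and $+\infty$ are separated from interior points, and from each other, using the boundary values $b_\infty$ and $b_0$ supplied by Lemma \ref{lim-sym-b}. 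This is exactly the content established in \cite{R-SN}, and combining it with the preceding isometric identification yields the asserted isomorphism $T_b\mapsto\gamma^b$ onto $C[0,\infty]$.
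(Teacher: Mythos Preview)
Your proposal is correct and follows the same route as the paper: continuity at the endpoints via Lemma \ref{lim-sym-b}, and the identification of the generated algebra with $C[0,\infty]$ via the result cited as Theorem 4.8 in \cite{R-SN}. The paper in fact gives no further argument beyond that citation, whereas you spell out the isometric $*$-homomorphism structure and sketch the Stone--Weierstrass separation; this is helpful elaboration but not a departure from the paper's approach.
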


\vspace{.3cm}
Obviously the spectral function $\gamma^b(x_2)$ is defined and continuous on $\overline{\Pi}$ but 
it is constant along each horizontal straight line. 
Thus,  $\gamma^b$ is identified with a continuous function on the quotient space
$\overline{\Pi}/\overline{\R}$, which is homeomorphic to $\overline{\R}_+$.


	\section{Toeplitz operators with continuous symbols $a(\im \zeta_{1})$.}\label{Alg-a}

In this section, we study the $C^*$-algebra generated by all Toeplitz operators  $T_a$,
where symbols $a(\im \zeta_{1})$ are taken to be continuous on $\overline{\mathbb{R}}$. 
Once again, such a $C^*$-algebra can be identified with the algebra of all continuous functions on a quotient space
of $\overline{\Pi}$.  Henceforth, $(x_1, x_2)$ will denote points in $\overline{\Pi}$ instead of intervals.

It is fairly simple to see that $\gamma^{a}$ is continuous on $\Pi$.
Take the change of variable $y_1 \mapsto 2\sqrt{x_2} y_1+x_1$ in the integral representation of  $\gamma^{a}$, then
$$\gamma^{a}(x_{1}, x_{2})= 2\sqrt{x_{2}} \int_{\R} a(y_{1})(h_{j-1}(2\sqrt{x_{2}}y_{1}+x_{1} ))^{2} dy_{1}.$$
The function $\gamma^a$ is continuous at each point $(x_{1},x_{2}) \in \Pi$ because of the continuity of
$h_{j-1}$ and the Lebesgue dominated convergence theorem.
Next, we will prove that  $\gamma^a$ has one-side  limit value at each point of $\R \times \{0\}$.
For  $a \in L^{\infty}(\R)$ we introduce the notation
\begin{equation}\label{limit-a-infty}
	a_{-}= \lim_{y\rightarrow -\infty} a(y) \quad \text{and} \quad 	a_{+}=\lim_{y\rightarrow +\infty} a(y)
\end{equation}
if such limits exist.

\begin{lem}\label{lim-gamma-x0-0}
	Let $a\in L^{\infty}(\R)$, and suppose that $a(y)$ converges at  $y=\pm \infty$.
	Then for each $x_{0} \in \R$, the spectral function $\gamma^{a}$  satisfies
	\begin{equation}\label{limit-R}
	\lim_{(x_{1}, x_{2}) \rightarrow (x_{0}, 0)} \gamma^{a}(x_{1}, x_{2})= a_{-}\int_{-\infty}^{x_{0}} (h_{j-1}(y_{1}))^{2} dy_{1}
	+  a_{+}\int_{x_{0}}^{\infty} (h_{j-1}(y_{1}))^{2} dy_{1}.
	\end{equation}
\end{lem}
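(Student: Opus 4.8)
The plan is to work directly with the integral representation (\ref{sim-a}),
$$\gamma^{a}(x_{1}, x_{2}) = \int_{\R} a\!\left( \frac{-x_{1}+y_{1}}{2\sqrt{x_{2}}} \right)  (h_{j-1}(y_{1}))^{2}\, dy_{1},$$
in which the weight $(h_{j-1}(y_1))^2$ is a fixed probability density on $\R$ (since $h_{j-1}$ is $L^2$-normalized, $\int_\R (h_{j-1})^2\, dy_1 = 1$), while the argument of $a$ degenerates into a sharp threshold as $x_2 \to 0^+$. First I would fix $y_1 \neq x_0$ and track the inner argument $\frac{y_1 - x_1}{2\sqrt{x_2}}$ along the approach $(x_1,x_2) \to (x_0,0)$: when $y_1 > x_0$ the numerator is eventually bounded below by a positive constant while the denominator tends to $0^+$, so the argument tends to $+\infty$; when $y_1 < x_0$ it tends to $-\infty$. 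Using the hypothesis that $a$ has limits $a_+$ and $a_-$ at $\pm\infty$, the integrand therefore converges pointwise (for a.e.\ $y_1$, the exceptional point $y_1 = x_0$ being null) to $a_+ (h_{j-1})^2$ on $(x_0,\infty)$ and to $a_-(h_{j-1})^2$ on $(-\infty, x_0)$.

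Second, I would pass this pointwise statement through the Lebesgue dominated convergence theorem, with the fixed majorant $\|a\|_\infty (h_{j-1}(y_1))^2 \in L^1(\R)$, which yields exactly the right-hand side of (\ref{limit-R}). The one genuine subtlety is that (\ref{limit-R}) is a \emph{joint} limit in $(x_1,x_2)$, whereas DCT is a statement about sequences; the clean way around this is the sequential characterization of limits. Thus I would fix an arbitrary sequence $(x_1^{(n)}, x_2^{(n)}) \to (x_0, 0)$ with $x_2^{(n)} > 0$, run the pointwise analysis and DCT along it, and observe that the resulting limit does not depend on the chosen sequence. Since every sequence produces the same value, the joint limit exists and equals that common value.

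I expect the main obstacle to be purely this path-independence bookkeeping rather than any hard estimate: once one commits to the sequential reduction, the pointwise limits of $a$ are forced by the existence of $a_\pm$, and the transition zone near $y_1 = x_0$ (where the argument of $a$ stays moderate) requires no separate control because DCT only needs a.e.\ convergence together with the integrable majorant. I would not use the change-of-variable form $2\sqrt{x_2}\int_\R a(y_1)(h_{j-1}(2\sqrt{x_2}y_1+x_1))^2\, dy_1$ here: there the vanishing prefactor competes with a spreading, non-integrable mass, which obscures the limit, whereas in (\ref{sim-a}) the weight stays a fixed density and all the degeneration is isolated in the argument of $a$.
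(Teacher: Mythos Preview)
Your argument is correct and takes a genuinely different route from the paper's. The paper proves the limit by a direct $\epsilon$--$\delta$ estimate: it splits the integration domain into a small window $[x_0-\delta_1,x_0+\delta_1]$ (where the contribution is bounded by $\|a\|_\infty$ times a small $(h_{j-1})^2$-mass) and the two complementary tails, and then chooses $\delta$ so that $|x_1-x_0|<\delta$, $0<x_2<\delta$ force $|{-x_1+y_1}|/(2\sqrt{x_2})>N$ on those tails, making $|a(\cdot)-a_\pm|$ uniformly small there. Your approach replaces all of this bookkeeping by the single observation that, along any sequence $(x_1^{(n)},x_2^{(n)})\to(x_0,0)$, the argument of $a$ tends to $+\infty$ for each fixed $y_1>x_0$ and to $-\infty$ for each $y_1<x_0$, so the integrand converges a.e.\ and is dominated by $\|a\|_\infty (h_{j-1})^2\in L^1(\R)$; DCT and the sequential characterization of limits (legitimate since we are in a metric setting) finish the job. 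What the paper's method buys is an explicit $\delta=\delta(\epsilon)$ and a template that it reuses almost verbatim in Lemmas~\ref{lim-phi-infty-0}--\ref{lim-phi-t0-infty}; what your method buys is brevity and the clean conceptual picture that all the degeneration sits inside the argument of $a$ against a fixed probability weight. Your remark about avoiding the change-of-variables form is also on point: in that form the mass spreads and there is no fixed $L^1$ majorant, so DCT would not apply directly.
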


\begin{proof}
Let $A$ denote the  right-hand side of equality (\ref{limit-R}).
	Take $\epsilon > 0$. We will prove that there exist $\delta >0$ such that
	$|\gamma^{a}(x_{1}, x_{2})-A| < \epsilon$ whenever $|x_{1}-x_{0}|< \delta$ and $0<x_{2}< \delta$.
	Note that $|a_-|,|a_+| \leq \|a\|_{\infty}$.
	Since $\int_{-\infty}^{\infty}(h_{j-1}(y_{1}))^{2} dy_{1}=1$,
	there exists  $\delta_{1} >0$  such that
	$$\|a\|_{\infty} \int_{-\delta_{1}+x_{0}}^{\delta_{1}+x_{0}}(h_{j-1}(y_{1}))^{2} dy_{1} < \frac{\epsilon}{5 }.$$
	Then
	\begin{eqnarray*} 
		I
		&:=&|\gamma^{a}(x_{1}, x_{2})-A| \\
		& = & \bigg| \int_{-\infty}^{\infty} a\left(\frac{-x_{1}+y_{1}}{2\sqrt{x_{2}}}  \right)  (h_{j-1}(y_{1}))^{2} dy_{1}\\ 
		& - & a_{-}\int_{-\infty}^{x_{0}} (h_{j-1}(y_{1}))^{2} dy_{1}- a_{+}\int_{x_{0}}^{\infty} (h_{j-1}(y_{1}))^{2} dy_{1}\bigg| \\
		& \leq & \int_{-\infty}^{-\delta_{1}+x_{0}}\left| a\left( \frac{-x_{1}+y_{1}}{2\sqrt{x_{2}}} \right) -a_{-}\right|  (h_{j-1}(y_{1}))^{2} d{y_{1}} \\
		&+& |a_{-}| \int_{-\delta_{1}+x_{0}}^{x_{0}}(h_{j-1}(y_{1}))^{2} dy_{1} + |a_{+}| \int_{x_{0}}^{\delta_{1}+x_{0}}(h_{j-1}(y_{1}))^{2} dy_{1}\\
		&+& \int_{-\delta_{1}+x_{0}}^{\delta_{1}+x_{0}} \left| a\left( \frac{-x_{1}+y_{1}}{2\sqrt{x_{2}}} \right)  (h_{j-1}(y_{1}))^{2} \right| d{y_{1}} \\
		&+&\int_{\delta_{1}+x_{0}}^{\infty}\left| a\left( \frac{-x_{1}+y_{1}}{2\sqrt{x_{2}}} \right) -a_{+}\right|  (h_{j-1}(y_{1}))^{2}d{y_{1}} \\
		& \leq &     \max_{-\infty< y_{1}< -\delta_{1}+x_{0}}\left| a\left( \frac{-x_{1}+y_{1}}{2\sqrt{x_{2}}} \right) -a_{-}  \right| +
		\frac{ 3\epsilon}{5} \\
		&+&\max_{\delta_{1}+ x_{0}< y_{1}< \infty}\left| a\left( \frac{-x_{1}+y_{1}}{2\sqrt{x_{2}}} \right) -a_{+}  \right|. 
	\end{eqnarray*}
	
We have assumed that $a(y)$ converges  at $\pm \infty$, then there exists $N>0$ such that 
	$|a(y)-a_{-}|< \epsilon/5$ and $|a(y)-a_{+}|< \epsilon/5$ for $|y|> N$. 
	Let $\delta=\min \{\delta_{1}/2, \delta_1^2/(16N^2) \}$.
	Then we have $\frac{1}{2\sqrt{x_{2}}}|-x_{1}+y_{1}| > N$ if $|x_1-x_0| < \delta$,
	$ 0 <x_2 < \delta$ and $|y_1-x_0|\geq \delta_{1}$. 
	Thus,
	$$\max_{-\infty< y_{1}< -\delta_{1}+x_{0}}\left| a\left(\frac{-x_{1}+y_{1}}{2\sqrt{x_{2}}} \right) -a_{-}  \right|< \frac{\epsilon}{5} $$
	and
	$$\max_{\delta_{1}+ x_{0}< y_{1}< \infty}\left| a\left(\frac{-x_{1}+y_{1}}{2\sqrt{x_{2}}} \right)  -a_{+}  \right| < \frac{\epsilon}{5}.$$
Finally, we conclude that $|\gamma^{a}(x_{1}, x_{2})-A|<\epsilon$ whenever 
		$|x_{1}-x_{0}|< \delta$ and $0<x_{2}< \delta$.
\end{proof}

In general, $\gamma^a(x_1,x_2)$ does not converge at each point  $x=(\pm \infty, +\infty)\in \overline{\Pi}$;  
however, $\gamma^{a}(x_{1}, x_{2})$  has limit values along the parabolas $x_{2}=\alpha (x_{1}^{2}+1)$, with $\alpha >0$.
We will define a bijective mapping  $\Phi: \Pi \rightarrow \Pi$ so that
  $\phi^{a}=\gamma^a \circ \Phi^{-1}$ will be  a continuous mapping on
 $\overline{\Pi}=\overline{\R}\times\overline{\R}_+$ with the usual topology.


\subsection{Modified spectral function for $T_a$.}

Let $\Phi: \Pi \longrightarrow \Pi$ be the mapping
$$
\Phi(x_{1}, x_{2})=\left(x_{1}, \frac{x_{2}}{x_{1}^{2} + 1}\right).$$
It is easy to see that $\Phi^{-1}(t_1,t_2)=(t_1, (t_1^2+1)t_2)$. 
Concerning the spectral properties of $T_a$, the function  $\phi^{a}:=\gamma^{a} \circ \Phi^{-1}$ is as important as $\gamma^a$, 
but $\phi^a$ behaves much better than $\gamma^a$, at least for $a$ continuous on $\overline{\R}$. From now on we take $\phi^a$ 
as the spectral function for the Toeplitz operator $T_a$. A direct computation shows that
$$\phi^{a}(t_{1}, t_{2})=
\int_{-\infty}^{\infty} a\left(\frac{-t_{1}+s_{1}}{2\sqrt{t_{2}(t_{1}^{2} +1)}}\right)  (h_{j-1}(s_{1}))^{2} ds_{1}.$$

Both $\Phi$ and $\Phi^{-1}$ are continuous on $\Pi$. 
Besides, the spectral function $\phi^{a}=\gamma^{a} \circ \Phi^{-1}$ is continuous on $\Pi$ because $\gamma^{a}$ is. Since 
$\Phi^{-1}(t_1, 0)=(t_1,0)$,  we have $\phi^{a}(t_{1}, 0)=\gamma^{a}(x_1, 0)$. By Lemma \ref{lim-gamma-x0-0},
$\phi^{a}$ is also continuous on $\mathbb{R} \times \{0 \}$.

\begin{thm}
	For $a(\im \zeta_1) \in C(\overline{\R})$, the spectral function $\phi^a: \Pi \rightarrow \mathbb{C}$ 
	can be extended continuously to $\overline{\Pi}=\overline{\R} \times \overline{\R}_+$.
\end{thm}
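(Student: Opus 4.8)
The plan is to invoke the standard fact that a bounded continuous function on the dense open set $\Pi$ extends continuously to the compact metric space $\overline{\Pi}$ precisely when the limit $\lim_{\Pi \ni (t_1,t_2) \to p}\phi^{a}(t_1,t_2)$ exists for every boundary point $p \in \overline{\Pi}\setminus \Pi$; the resulting extension is then automatically continuous by a routine $\epsilon/2$ argument (for $y$ near $x$, limiting values are squeezed between nearby interior values). Since $\phi^{a}$ is bounded, $|\phi^{a}|\le \|a\|_\infty$, and already continuous on $\Pi$, I would reduce the theorem to computing these boundary limits one edge at a time.

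The single tool behind every boundary computation is the Lebesgue dominated convergence theorem applied to the integrand $s_1 \mapsto a\big(\tfrac{s_1 - t_1}{2\sqrt{t_2(t_1^2+1)}}\big)(h_{j-1}(s_1))^{2}$, which is dominated by the integrable function $\|a\|_\infty (h_{j-1}(s_1))^{2}$ uniformly in $(t_1,t_2)$. For an arbitrary sequence in $\Pi$ converging to a boundary point I would identify the pointwise-in-$s_1$ limit of the argument $\theta(t_1,t_2;s_1)=\tfrac{s_1-t_1}{2\sqrt{t_2(t_1^2+1)}}$ and use the continuity of $a$ on $\overline{\R}$ (so that $a$ attains the values $a_{\pm}$ at $\pm\infty$) to pass to the limit under the integral sign. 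The pointwise limits I expect are: on the bottom edge this is exactly Lemma \ref{lim-gamma-x0-0}, since $\Phi^{-1}(t_1,0)=(t_1,0)$ forces $(t_1,(t_1^{2}+1)t_2)\to(x_0,0)$; on the right edge $\{+\infty\}\times(0,\infty)$ the factorization $\theta=\tfrac{s_1-t_1}{2\sqrt{t_1^{2}+1}}\cdot t_2^{-1/2}$ has first factor $\to-\tfrac12$, giving the limit $a(-1/(2\sqrt{t_2}))$; symmetrically the left edge gives $a(1/(2\sqrt{t_2}))$; and on the entire top edge the uniform bound $|\theta|\le(|s_1|+1)/(2\sqrt{t_2})\to0$ (uniform in $t_1$ because $|t_1|/\sqrt{t_1^{2}+1}\le1$) forces the constant value $a(0)$.

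The four corners are where I expect the only genuine care to be required, and I would treat them with the same engine. At $(+\infty,0)$ the factor $\tfrac{s_1-t_1}{2\sqrt{t_1^{2}+1}}\to-\tfrac12<0$ while $t_2^{-1/2}\to+\infty$, so $\theta\to-\infty$ for every fixed $s_1$ no matter the relative rates of $t_1\to+\infty$ and $t_2\to0^{+}$; hence the limit is $a_{-}$. Symmetrically $(-\infty,0)$ gives $a_{+}$, while both top corners $(\pm\infty,+\infty)$ give $a(0)$ by the uniform top-edge bound. Finally I would verify that these corner values agree with the edge limits at the shared endpoints, for instance $a(-1/(2\sqrt{t_2}))\to a_{-}$ as $t_2\to0^{+}$ and $\to a(0)$ as $t_2\to+\infty$, so that the boundary data patch together into one continuous function. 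Combined with the existence of every boundary limit, this yields the continuous extension to $\overline{\Pi}$. The main obstacle is precisely the two bottom corners, where the competing limits $t_1\to\pm\infty$ and $t_2\to0^{+}$ must be shown to produce the single value $a_{\mp}$ along every approach path; the factorization of $\theta$ is what resolves this cleanly, since the sign of the first factor is determined solely by the sign of $t_1$.
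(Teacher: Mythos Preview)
Your proposal is correct and follows exactly the paper's strategy: the paper's proof is one line invoking Lemmas~\ref{lim-gamma-x0-0}, \ref{lim-phi-infty-0}, \ref{lim-phi-infty-t0}, \ref{lim-phi-t0-infty}, which compute the boundary limits along the bottom edge, the bottom corners $(\pm\infty,0)$, the lateral edges $\{\pm\infty\}\times\R_+$, and the top edge (including the top corners), respectively --- the same edge-by-edge decomposition you outline, with the same limiting values. The only difference is in execution: the paper proves each of those lemmas by explicit $\epsilon$-$\delta$ tail-cutting of the integral, whereas you run every case through a single application of dominated convergence against the probability density $(h_{j-1})^2$, using the continuity of $a$ on $\overline{\R}$ to identify the pointwise limit of the integrand. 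Your factorization $\theta=\dfrac{s_1-t_1}{2\sqrt{t_1^{2}+1}}\cdot t_2^{-1/2}$ makes the bottom corners $(\pm\infty,0)$ slightly more transparent than the paper's hands-on estimate in Lemma~\ref{lim-phi-infty-0}, and your uniform bound $|\theta|\le(|s_1|+1)/(2\sqrt{t_2})$ is exactly the content of Lemma~\ref{lim-phi-t0-infty}. The two arguments are equivalent; yours is a cleaner packaging of the same computations.
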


\begin{proof}
Follows from Lemmas \ref{lim-gamma-x0-0},  \ref{lim-phi-infty-0}, \ref{lim-phi-infty-t0}  and  \ref{lim-phi-t0-infty} below.
\end{proof}

For any domain $X\subset \R^m$  and a function $\varphi: X\rightarrow \mathbb{C}$,
we write $\varphi(x_0)$ to mean the limit value of $\varphi$ at $x_0$, even if $x_0$ does not belong to $X$.
For example, $a(+\infty)$ means $\lim\limits_{x\rightarrow +\infty} a(x)$.

\begin{lem}\label{lim-phi-infty-0}
	Let $a\in L^{\infty}(\R)$, and suppose that $a(y)$ converges at the points  $y=\pm \infty$. Then $\phi^{a}$ satisfies
	$$\lim_{(t_{1}, t_{2}) \rightarrow (+\infty, 0)} \phi^{a}(t_{1}, t_{2})= a(-\infty).$$
	That is, for $\epsilon >0$ there exists $\delta>0$ and $N>0$ such that $|\phi^{a}(t_{1}, t_{2})|<\epsilon$
	whenever $0< t_{2}< \delta$ and $t_{1}>N$. Analogously,
	$$ \lim_{(t_{1}, t_{2}) \rightarrow (-\infty, 0)} \phi^{a}(t_{1}, t_{2})=a(+\infty).$$	
\end{lem}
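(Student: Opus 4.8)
The plan is to subtract the constant $a_{-}=a(-\infty)$ inside the integral and estimate the resulting weighted average of $|a(\cdot)-a_{-}|$. Writing $\rho=\rho(t_1,t_2)=2\sqrt{t_2(t_1^2+1)}$ and using $\int_{-\infty}^{\infty}(h_{j-1}(s_1))^2\,ds_1=1$, one has
$$\phi^{a}(t_1,t_2)-a_{-}=\int_{-\infty}^{\infty}\Big(a\Big(\tfrac{s_1-t_1}{\rho}\Big)-a_{-}\Big)(h_{j-1}(s_1))^2\,ds_1,$$
so that $|\phi^{a}(t_1,t_2)-a_{-}|$ is bounded by $\int_{-\infty}^{\infty}\big|a(\tfrac{s_1-t_1}{\rho})-a_{-}\big|(h_{j-1}(s_1))^2\,ds_1$. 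Given $\epsilon>0$, I would split this integral at $|s_1|=M$ for a suitable $M>0$.

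For the tail $|s_1|>M$: since $|a_{-}|\le\|a\|_{\infty}$, the integrand is at most $2\|a\|_{\infty}(h_{j-1}(s_1))^2$, and because $(h_{j-1})^2$ is integrable I can fix $M$ large enough that $2\|a\|_{\infty}\int_{|s_1|>M}(h_{j-1}(s_1))^2\,ds_1<\epsilon/2$, uniformly in $(t_1,t_2)$.

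For the bulk $|s_1|\le M$, the crux is to show that the argument $\tfrac{s_1-t_1}{\rho}$ is driven to $-\infty$ uniformly in $s_1$ as $(t_1,t_2)\to(+\infty,0)$. Since $a$ converges at $-\infty$, fix $K>0$ with $|a(y)-a_{-}|<\epsilon/2$ for $y<-K$. For $t_1\ge 2M$ one has $t_1-s_1\ge t_1/2$, and for $t_1\ge 1$ the elementary bound $\sqrt{t_1^2+1}\le\sqrt{2}\,t_1$ gives $\rho\le 2\sqrt{2}\,t_1\sqrt{t_2}$; hence
$$\frac{t_1-s_1}{\rho}\ge\frac{t_1/2}{2\sqrt{2}\,t_1\sqrt{t_2}}=\frac{1}{4\sqrt{2}\,\sqrt{t_2}},$$
a bound independent of both $t_1$ and $s_1\in[-M,M]$. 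Choosing $N=\max\{1,2M\}$ and $\delta>0$ so small that $(4\sqrt{2}\,\sqrt{\delta})^{-1}\ge K$, I get $\tfrac{s_1-t_1}{\rho}<-K$ for all $|s_1|\le M$, $t_1>N$ and $0<t_2<\delta$, whence the bulk integrand is $<\epsilon/2$ and the bulk contributes at most $\epsilon/2$. Combining the two pieces yields $|\phi^{a}(t_1,t_2)-a_{-}|<\epsilon$ on the neighborhood $(N,+\infty]\times[0,\delta)$ of $(+\infty,0)$, which is the claim.

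The main obstacle is precisely the uniform lower bound in the last display: a priori, as $t_1\to+\infty$ and $t_2\to0$ independently, the factor $t_2(t_1^2+1)$ inside $\rho$ could behave erratically, and indeed $\gamma^{a}$ itself fails to converge at $(+\infty,+\infty)$. The point is that after the parabolic normalization $\Phi$ the factor $t_1^2+1$ is exactly balanced by the numerator $t_1-s_1\sim t_1$, so the ratio depends on $t_2$ alone in the relevant regime; this is what makes $\phi^{a}$, rather than $\gamma^{a}$, extend continuously to the corner. The limit $(t_1,t_2)\to(-\infty,0)$ giving $a(+\infty)=a_{+}$ is entirely analogous: for $t_1\le-2M$ the same estimate drives $\tfrac{s_1-t_1}{\rho}$ to $+\infty$ uniformly over $|s_1|\le M$, and one invokes the convergence of $a$ at $+\infty$ instead.
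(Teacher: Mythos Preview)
Your proof is correct and follows essentially the same route as the paper's: split the $s_1$-integral into a tail controlled by the integrability of $(h_{j-1})^2$ and a remaining piece on which the argument $(s_1-t_1)/\rho$ is shown to go to $-\infty$ uniformly, via the same elementary comparison of $t_1-s_1$ with $\sqrt{t_1^2+1}$. The only cosmetic differences are that the paper reduces first to $a(-\infty)=0$ and uses a one-sided split at $s_1=s_0$, whereas you subtract $a_{-}$ directly and split symmetrically at $|s_1|=M$.
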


\begin{proof}
	Suppose that $a(-\infty)=0$. Let  $\epsilon>0$.
	Since $h_{j-1} \in L^{2}(\R)$, there exists $s_{0}>0$ such that
	$$\|a\|_{\infty} \int_{s_{0}}^{\infty}(h_{j-1}(s_{1}))^{2} ds_{1} < \frac{\epsilon}{2}.$$
	Take into account $\int_{-\infty}^{\infty}(h_{j-1}(s_{1}))^{2} ds_{1}=1$ in the following computation
	\begin{eqnarray*}
		|\phi^{a}(t_{1}, t_{2})| 
		& = & \left| \int_{-\infty}^{\infty} a\left(\frac{-t_{1}+s_{1}}{2\sqrt{t_{2}(t_{1}^{2} +1)}}\right) (h_{j-1}(s_{1}))^{2} ds_{1}  \right|\\
		& \leq & \int_{-\infty}^{s_{0}} \left| a\left(\frac{-t_{1}+s_{1}}{2\sqrt{t_{2}(t_{1}^{2} +1)}}\right) (h_{j-1}(s_{1}))^{2} \right| d{s_{1}}\\
		&+& \int_{s_{0}}^{\infty}\left| a\left(\frac{-t_{1}+s_{1}}{2\sqrt{t_{2}(t_{1}^{2} +1)}}\right)  (h_{j-1}(s_{1}))^{2}\right| d{s_{1}}\\
		& \leq &     \max_{-\infty< s_{1}< s_{0}}\left| a\left( \frac{-t_{1}+s_{1}}{2\sqrt{t_{2}(t_{1}^{2} +1)}} \right)  \right| +
		\frac{ \epsilon}{2}.
	\end{eqnarray*}
	
	Since $a(s)$ converges to zero at $-\infty$, there exists $N_{1}>0$ such that $|a(s)|< \epsilon/2$ for $-s> N_{1}$. 
	Take $\delta= 1/(16N_{1}^{2})$. Then we have  $\frac{1}{2\sqrt{t_2}} > 2N_1$ for $0<t_2<\delta$.
	On the other hand, 	assume  $t_{1}> s_0$ 	and $-\infty < s_1 < s_{0}$.  Then
	$$\frac{t_{1}- s_{1}}{\sqrt{t_{1}^{2} +1}} > \frac{t_{1}- s_{0}}{\sqrt{t_{1}^{2} +1}}.$$
	The right-hand side of this inequality converges to $1$ when 
	$t_1$ tends to $+\infty$, thus there exists
	$N_2>s_0$ such that $(t_{1}- s_{0})/\sqrt{t_{1}^{2} +1}>1/2$ for $t_1>N_2$. Consequently,
	$$N_1=2N_1\frac{1}{2} < \frac{1}{2\sqrt{t_2}} \frac{t_{1}- s_{0}}{\sqrt{(t_{1}^{2} +1)}} <
	\frac{t_{1}-s_{1}}{2\sqrt{t_{2}(t_{1}^{2} +1)}}. $$
	For $0<t_2<\delta$ and $t_1>N:=\max\{ s_0,N_2 \}$ we have
	$$\left| a\left(\frac{-t_{1}+s_{1}}{2\sqrt{t_{2}(t_{1}^{2} +1)}}\right)  \right| < \frac{\epsilon}{2}.$$
	
	We define $\tilde{a}(s)=a(s)-a_2$ in the case $a(-\infty) \neq 0$, where $a_2:=a(-\infty)$ is a constant. 
 Note that $\tilde{a}(s)$  converges to zero at $-\infty$, 
	and $\phi^{a_1+a_2}=\phi^{a_1}+\phi^{a_2}$ for any nilpotent symbols $a_1$ and $a_2$.
	Then
	\begin{eqnarray*}
		\lim_{(t_{1}, t_{2}) \rightarrow (+\infty, 0)} \phi^{a}(t_{1}, t_{2})
		& = & \lim_{(t_{1}, t_{2}) \rightarrow (+\infty, 0)} \phi^{\tilde{a}+a_2}(t_{1}, t_{2}) \\
		& = &  \lim_{(t_{1}, t_{2}) \rightarrow (+\infty, 0)} \phi^{\tilde{a}}(t_{1}, t_{2})
		+
		a_2 \int_{-\infty}^{\infty} (h_{j-1}(s_{1}))^{2} ds_{1}\\
		& = & a(-\infty).
	\end{eqnarray*}

Finally, the limit of $\phi^a(t_1,t_2)$ at $(-\infty,0)$ can be proved analogously. 
\end{proof}

\begin{lem}\label{lim-phi-infty-t0}
	Let $t_{0} \in \R_+$. If $a\in L^{\infty}(\R)$ is continuous at $-1/(2\sqrt{t_0})$, 
	then  the spectral function $\phi^{a}$ satisfies
	$$\lim_{(t_{1}, t_{2}) \rightarrow (+\infty, t_{0})} \phi^{a}(t_{1}, t_{2})= a\left(-\frac{1}{2\sqrt{t_{0}}}\right).$$
	Analogously, if $a$ is continuous at $1/(2\sqrt{t_0})$,  then
	$$ \lim_{(t_{1}, t_{2}) \rightarrow (-\infty, t_{0})} \phi^{a}(t_{1}, t_{2})=a\left(\frac{1}{2\sqrt{t_{0}}}\right).$$	
\end{lem}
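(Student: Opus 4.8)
The plan is to work directly from the integral representation
$$\phi^{a}(t_{1}, t_{2})=\int_{-\infty}^{\infty} a\!\left(\frac{-t_{1}+s_{1}}{2\sqrt{t_{2}(t_{1}^{2} +1)}}\right)  (h_{j-1}(s_{1}))^{2} \, ds_{1},$$
exactly as in the proofs of Lemmas \ref{lim-gamma-x0-0} and \ref{lim-phi-infty-0}. Write $c:=-1/(2\sqrt{t_{0}})$ and set $\Theta(t_1,t_2,s_1):=(-t_1+s_1)/(2\sqrt{t_2(t_1^2+1)})$. Since $\int_{-\infty}^{\infty}(h_{j-1}(s_1))^2\,ds_1=1$, I would first rewrite the target difference as
$$\phi^{a}(t_{1}, t_{2})-a(c)=\int_{-\infty}^{\infty}\Big[a(\Theta(t_1,t_2,s_1))-a(c)\Big](h_{j-1}(s_{1}))^{2}\,ds_{1},$$
so that everything reduces to controlling the integrand through the continuity of $a$ at the single point $c$.

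The heart of the argument is the observation that $\Theta(t_1,t_2,s_1)\to c$ as $(t_1,t_2)\to(+\infty,t_0)$, uniformly for $s_1$ in any bounded interval. To see this I would factor
$$\Theta(t_1,t_2,s_1)=\frac{1}{2\sqrt{t_2}}\cdot\frac{-t_{1}+s_{1}}{\sqrt{t_{1}^{2} +1}}=\frac{1}{2\sqrt{t_2}}\left(\frac{-t_{1}}{\sqrt{t_{1}^{2} +1}}+\frac{s_{1}}{\sqrt{t_{1}^{2} +1}}\right).$$
The first factor tends to $1/(2\sqrt{t_0})$ as $t_2\to t_0$; in the parenthesis, $-t_1/\sqrt{t_1^2+1}\to -1$ and $s_1/\sqrt{t_1^2+1}\to 0$ as $t_1\to+\infty$, the latter uniformly once $|s_1|\le M$. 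Hence $\Theta\to (1/(2\sqrt{t_0}))\cdot(-1)=c$ uniformly on $|s_1|\le M$.

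With this in hand the $\varepsilon$-$\delta$ estimate is the same splitting used before. Given $\varepsilon>0$, since $h_{j-1}\in L^2(\R)$ I would choose $M>0$ with $2\|a\|_{\infty}\int_{|s_1|>M}(h_{j-1}(s_1))^2\,ds_1<\varepsilon/2$, which controls the tail of the integral using $|a(\Theta)-a(c)|\le 2\|a\|_\infty$. By continuity of $a$ at $c$ there is $\eta>0$ with $|a(s)-a(c)|<\varepsilon/2$ whenever $|s-c|<\eta$; by the uniform convergence just established there exist $N>0$ and $\delta>0$ so that $|\Theta(t_1,t_2,s_1)-c|<\eta$ for all $|s_1|\le M$ whenever $t_1>N$ and $|t_2-t_0|<\delta$. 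Combining the two regions gives $|\phi^a(t_1,t_2)-a(c)|<\varepsilon$ on that neighborhood, proving the first limit. The case $(t_1,t_2)\to(-\infty,t_0)$ is identical, except that now $-t_1/\sqrt{t_1^2+1}\to +1$, so $\Theta\to 1/(2\sqrt{t_0})$ and the continuity hypothesis is imposed at that point instead. I expect the only delicate point to be verifying that the convergence $\Theta\to c$ is genuinely uniform in $s_1$ on $[-M,M]$ while simultaneously letting $t_2\to t_0$ and $t_1\to+\infty$; this is exactly why the bounded factor $s_1/\sqrt{t_1^2+1}$ must be estimated uniformly, and it is what forces the tail to be split off first so that continuity at the single point $c$ suffices.
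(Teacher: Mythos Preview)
Your proof is correct and follows essentially the same approach as the paper: split off the tails $|s_1|>M$ using $h_{j-1}\in L^2(\R)$, and on the bounded range show that the argument of $a$ tends to $-1/(2\sqrt{t_0})$ uniformly via the same factorization $\Theta=\frac{1}{2\sqrt{t_2}}\big(\frac{-t_1}{\sqrt{t_1^2+1}}+\frac{s_1}{\sqrt{t_1^2+1}}\big)$. The only cosmetic difference is that the paper first reduces to the case $a(-1/(2\sqrt{t_0}))=0$ by subtracting a constant, whereas you subtract $a(c)$ directly inside the integral; these are equivalent.
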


\begin{proof}
	Suppose that $a$ converges to zero at $-1/(2\sqrt{t_{0}})$. 
	Let $\epsilon >0$.	Since $h_{j-1} \in L^{2}(\R)$, there exists $s_{0}>0$  such that
	\begin{equation} \label{acotar-colas}
		\|a\|_{\infty}\int_{-\infty}^{-s_{0}}(h_{j-1}(s_{1}))^{2} ds_{1} < \frac{\epsilon}{3}, \quad
		\|a\|_{\infty}\int_{s_{0}}^{\infty}(h_{j-1}(s_{1}))^{2} ds_{1} < \frac{\epsilon}{3}.
	\end{equation}
	Take into account $\int_{-\infty}^{\infty}(h_{j-1}(s_{1}))^{2} ds_{1}=1$ in the following computation
	\begin{eqnarray*}
		|\phi^{a}(t_{1}, t_{2})|
		& \leq & \int_{-\infty}^{-s_{0}} 
		\left| a\left(\frac{-t_{1}+s_{1}}{2\sqrt{t_{2}(t_{1}^{2} +1)}}\right) \right| (h_{j-1}(s_{1}))^{2}  d{s_{1}} \\
		&+& \int_{-s_{0}}^{s_{0}}\left| a\left(\frac{-t_{1}+s_{1}}{2\sqrt{t_{2}(t_{1}^{2} +1)}}\right) \right|   
		(h_{j-1}(s_{1}))^{2}d{s_{1}}\\
		&+& \int_{s_{0}}^{\infty} \left| a\left(\frac{-t_{1}+s_{1}}{2\sqrt{t_{2}(t_{1}^{2} +1)}}\right) \right|    
		(h_{j-1}(s_{1}))^{2} d{s_{1}}\\
		& < &   \frac{2 \epsilon}{3}  +
		\max_{-s_{0}< s_{1}< s_{0}}\left| a\left(\frac{-t_{1}+s_{1}}{2\sqrt{t_{2}(t_{1}^{2} +1)}}\right) \right|. 
	\end{eqnarray*}
		
	Because of the continuity of $a(s)$  at $-1/(2\sqrt{t_0})$, there exists $\delta_{1}>0$ such that
	$|a(s)|< \epsilon/3$ for $|s-\frac{-1}{2\sqrt{t_{0}}}|< \delta_{1}$. 
	Let us estimate the value of the  argument of $a$:
	\begin{eqnarray*}
		I&:=& \left|\frac{1}{2\sqrt{t_{2}(t_{1}^{2} +1)}}(-t_{1}+s_{1})- \frac{-1}{2\sqrt{t_{0}}} \right| \\
		& \leq&
		\left|- \frac{1}{2\sqrt{t_{2}}}+ \frac{1}{2\sqrt{t_{0}}} \right|\left| \frac{t_{1}}{\sqrt{t_{1}^{2} + 1}}  \right| 
		+\frac{1}{2\sqrt{t_{0}}} \left| 1- \frac{t_{1}}{\sqrt{t_{1}^{2} + 1}}  \right|  \\
		&+& \left| \frac{s_{1}}{2\sqrt{t_{2}(t_{1}^{2} + 1)}} \right|. 
	\end{eqnarray*}
	Choose $\delta >0$ in such a way
	$\left|- \frac{1}{2\sqrt{t_{2}}}+ \frac{1}{2\sqrt{t_{0}}} \right| < \delta_{1} /3$ for $|t_{2}- t_{0}|<\delta$. 
	Pick $N_{1} >0$ such that
	$\left|1-\frac{t_{1}}{\sqrt{t_{1}^{2} + 1}}\right| < (2\sqrt{t_{0}}\delta_{1}) /3$  whenever $t_{1} > N_{1}$.
	Now assume that $|t_2-t_0|<\delta$ and $|s_{1}| < s_{0}$. Then $|\frac{1}{2\sqrt{t_{2}}}| < \frac{1}{2\sqrt{t_{0}}} + \frac{\delta_{1}}{3}$. 
	Thus,  $\frac{|s_{1}|}{2\sqrt{t_{2}(t_{1}^{2} + 1 )}}$ 
	converges to $0$ when $t_{1}$ tends to $+\infty$. Therefore, there exists $N>N_1$ such that
	$\frac{|s_{1}|}{2\sqrt{t_{2}(t_{1}^{2} + 1 )}}< \delta_{1} /3$ for	$t_{1}> N$.
	The additional condition $t_1>N$ implies
	$$\left| a\left(\frac{-t_{1}+s_{1}}{2\sqrt{t_{2}(t_{1}^{2} +1)}}\right) \right| < \epsilon /3.$$
	Hence $|\phi^{a}(t_{1}, t_{2})| < \epsilon$ if $|t_2-t_0|<\delta$ and $t_1>N$.
	
	If $a$  does not converge to zero at $-\frac{1}{2\sqrt{t_{0}}}$, then take the function
	$\tilde{a}(s)=a(s)-a_2$ and proceed as in the proof of Lemma \ref{lim-phi-infty-0}, where $a_2=a\left(-\frac{1}{2\sqrt{t_{0}}}\right)$. 
	
Finally, the justification of the limit  of $\phi^a(t_1,t_2)$ at $(-\infty,t_0)$ can be done analogously. 
\end{proof}

\begin{lem}\label{lim-phi-t0-infty}
	Let $a\in L^{\infty}(\R)$ be continuous at $0\in \R$. For $t_{0} \in \overline{\R}$, 
	the spectral function  $\phi^{a}$ satisfies
	$$\lim_{(t_{1}, t_{2}) \rightarrow (t_{0}, +\infty)} \phi^{a}(t_{1}, t_{2})= a(0).$$
	Actually, we have uniform convergence of $\phi^a(t_1,t_2)$ at $(t_0,+\infty)$, that is,  for $\epsilon>0$, 
	there exists $N>0$ such that $|\phi^{a}(t_{1}, t_{2})-a(0)|<\epsilon$ for all $t_2>N$ and for all $t_1\in \overline{\R}$. 
\end{lem}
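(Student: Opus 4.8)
The plan is to exploit the normalization $\int_{-\infty}^{\infty}(h_{j-1}(s_1))^2\,ds_1=1$ in order to write the deviation from the claimed limit as a single integral, namely
$$\phi^{a}(t_1,t_2)-a(0)=\int_{-\infty}^{\infty}\left[a\!\left(\frac{-t_1+s_1}{2\sqrt{t_2(t_1^2+1)}}\right)-a(0)\right](h_{j-1}(s_1))^2\,ds_1,$$
and then to bound the integrand by splitting the domain into a bounded region $|s_1|\leq s_0$ and its complementary tail $|s_1|>s_0$. On the tail I would use the crude estimate $|a(\xi)-a(0)|\leq 2\|a\|_\infty$ together with the fact that $h_{j-1}\in L^2(\R)$ to choose $s_0$ so large that $2\|a\|_\infty\int_{|s_1|>s_0}(h_{j-1}(s_1))^2\,ds_1<\epsilon/2$; this bound is manifestly independent of $t_1$ and $t_2$.

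The heart of the argument, and the step I expect to matter most, is a bound on the argument $\xi=\frac{-t_1+s_1}{2\sqrt{t_2(t_1^2+1)}}$ that is \emph{uniform in $t_1$}. Writing $|\xi|\leq \frac{|s_1|+|t_1|}{2\sqrt{t_2}\sqrt{t_1^2+1}}$ and using $\sqrt{t_1^2+1}\geq 1$ together with $|t_1|/\sqrt{t_1^2+1}\leq 1$, one obtains
$$|\xi|\leq \frac{|s_1|}{2\sqrt{t_2}}+\frac{1}{2\sqrt{t_2}}=\frac{|s_1|+1}{2\sqrt{t_2}}.$$
The crucial point is that the right-hand side no longer involves $t_1$, so on the bounded region $|s_1|\leq s_0$ we get $|\xi|\leq (s_0+1)/(2\sqrt{t_2})$ regardless of where $t_1$ sits; this is exactly what will furnish uniformity in $t_1$, including the limiting values $t_1=\pm\infty$, and hence covers every $t_0\in\overline{\R}$ at once.

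With this uniform bound in hand the proof closes quickly. Since $a$ is continuous at $0$, pick $\delta_1>0$ with $|a(s)-a(0)|<\epsilon/2$ for $|s|<\delta_1$, and then choose $N>0$ so that $(s_0+1)/(2\sqrt{N})<\delta_1$. For $t_2>N$ and any $t_1\in\R$ the argument satisfies $|\xi|<\delta_1$ throughout $|s_1|\leq s_0$, so the contribution of the bounded region is at most $(\epsilon/2)\int_{|s_1|\leq s_0}(h_{j-1}(s_1))^2\,ds_1\leq \epsilon/2$. Combining with the tail estimate yields $|\phi^{a}(t_1,t_2)-a(0)|<\epsilon$ for all $t_2>N$ and all $t_1\in\R$, which is the asserted uniform convergence; since $N$ does not depend on $t_1$, the same limit $a(0)$ is attained for every $t_0\in\overline{\R}$. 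The only genuine subtlety is the $t_1$-uniform control of $\xi$ above; the remainder is the standard continuity-plus-tail splitting already used in Lemmas \ref{lim-phi-infty-0} and \ref{lim-phi-infty-t0}.
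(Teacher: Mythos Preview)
Your proof is correct and follows essentially the same approach as the paper: both split the integral into a bounded region $|s_1|\leq s_0$ and its tails, and both rely on the same $t_1$-uniform bound $|\xi|\leq (1+s_0)/(2\sqrt{t_2})$ obtained from $|t_1|/\sqrt{t_1^2+1}\leq 1$ and $\sqrt{t_1^2+1}\geq 1$. The only cosmetic difference is that the paper first reduces to the case $a(0)=0$ via $\tilde a(s)=a(s)-a(0)$, whereas you subtract $a(0)$ directly inside the integral using the normalization; the estimates are otherwise identical.
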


\begin{proof}
	Suppose that $a(0)=0$. Let $\epsilon >0$, and choose 
	$s_{0}>0$ such that equations  (\ref{acotar-colas}) hold.
	Then
	\begin{eqnarray*}
		|\phi^{a}(t_{1}, t_{2})|
		& \leq & \int_{-\infty}^{-s_{0}} 
		\left| a\left(\frac{-t_{1}+s_{1}}{2\sqrt{t_{2}(t_{1}^{2} +1)}}\right) \right| (h_{j-1}(s_{1}))^{2}  d{s_{1}} \\
		&+& \int_{-s_{0}}^{s_{0}}\left| a\left(\frac{-t_{1}+s_{1}}{2\sqrt{t_{2}(t_{1}^{2} +1)}}\right) \right|   
		(h_{j-1}(s_{1}))^{2}d{s_{1}}\\
		&+& \int_{s_{0}}^{\infty} \left| a\left(\frac{-t_{1}+s_{1}}{2\sqrt{t_{2}(t_{1}^{2} +1)}}\right) \right|    
		(h_{j-1}(s_{1}))^{2} d{s_{1}}\\
		& < &    \frac{2 \epsilon}{3}  +
		\max_{-s_{0}< s_{1}< s_{0}}\left| a\left(\frac{-t_{1}+s_{1}}{2\sqrt{t_{2}(t_{1}^{2} +1)}}\right) \right|.
	\end{eqnarray*}
	
	By the continuity  of $a(s)$ at $0$, there exists $\delta_{1}>0$ such that $|a(s)|< \epsilon/3$ for 
	$|s| < \delta_{1}$. For  $ -s_0 <s_1 <s_0$ we have
	\begin{eqnarray*}
		\left| \frac{-t_{1}+s_{1}}{2\sqrt{t_{2}(t_{1}^{2} +1)}} \right| &\leq& 
		\frac{1}{2\sqrt{t_{2}}} \left( \left|\frac{t_{1}}{\sqrt{t_{1}^{2} + 1}}\right| +\frac{|s_{1}|}{\sqrt{t_{1}^{2} + 1}} \right)
		 <   \frac{1}{2\sqrt{t_{2}}}(1+s_0).
	\end{eqnarray*}

	Take $N=(1+s_{0})^{2}/(4 \delta_{1}^{2})$. The inequality  $t_2>N$ implies
	$\frac{1}{2\sqrt{t_2}}<\frac{\delta_{1}}{(1+s_0)}$. Thus, 
  if $t_{2} >N$, $t_{1} \in \overline{\R}$ and $-s_{0}<s_1 < s_{0}$, then
	$$\left| \frac{-t_{1}+s_{1}}{2\sqrt{t_{2}(t_{1}^{2} +1)}} \right| < \delta_{1}.$$
Consequently, 	$|\phi^{a}(t_{1}, t_{2})|<\epsilon$ for all $t_2>N$ and $t_1 \in \overline{\R}$.
	
	Finally, in the case $a(0) \neq 0$, the proof can be carry out by considering the symbol  $\tilde{a}(s)=a(s)-a(0)$.
\end{proof}

For each nilpotent symbol $a(\im \zeta_1) \in C(\overline{\R})$, the spectral function $\phi^{a}$ is continuous on $\overline{\Pi}$ 
and is constant along $\overline{\R}\times \{+\infty\}$.
For this reason, the $C^*$-algebra generated by all spectral functions $\phi^{a}$ is not $C(\overline{\Pi})$, 
but it coincides with the algebra of continuous functions on a quotient space of $\overline{\Pi}$.


\subsection{Toeplitz operators $T_a$ with continuous symbols $a(\text{Im}\, \zeta_1)$.}

Introduce the quotient space $\triangle:=\overline{\Pi} / (\overline{\R} \times \{+\infty\})$. 
By Lemma \ref{lim-phi-t0-infty}, the spectral function $\phi^{a}$ can be identified with a continuous function on $\triangle$, 
which will be also denoted by $\phi^{a}: \triangle \rightarrow \C$.  
We establish now one of our main results in this work.

\begin{thm}
		The $C^{*}$-algebra generated by all spectral functions
		$\phi^{a}$, with  $a(\im \zeta_1) \in C(\overline{\R})$,  is isomorphic and isometric to the  algebra  $C(\triangle)$. 
		That is,  the $C^*$-algebra generated by all Toeplitz  operators $T_a$ is isomorphic 
	to $C(\triangle)$, where the isomorphism is defined on the generators by the rule
	$$T_a  \longmapsto \phi^a.$$
\end{thm}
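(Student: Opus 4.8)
The plan is to reduce the statement to an application of the Stone--Weierstrass theorem on the compact Hausdorff space $\triangle$. By Theorem \ref{equiv-unitaria} each $T_a$ is unitarily equivalent to the multiplication operator $\gamma^a I$, and since $\Phi$ is a homeomorphism of $\Pi$ with $\phi^a=\gamma^a\circ\Phi^{-1}$, the operator norm of any $*$-polynomial in the $\gamma^a I$ equals the supremum over $\Pi$ of the corresponding $*$-polynomial in the functions $\gamma^a$, which in turn equals the supremum over $\overline{\Pi}$, and hence the $C(\triangle)$-norm, of the corresponding expression in the $\phi^a$. Consequently the $C^*$-algebra generated by $\{T_a:a\in C(\overline{\R})\}$ is isometrically $*$-isomorphic, via $T_a\mapsto\phi^a$, to the uniformly closed $*$-subalgebra $\mathcal{B}\subset C(\triangle)$ generated by $\{\phi^a\}$. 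It therefore suffices to prove $\mathcal{B}=C(\triangle)$. Now $\mathcal{B}$ is unital since $\phi^{1}\equiv 1$, and it is self-adjoint since $a\mapsto\phi^a$ is linear and $\overline{\phi^a}=\phi^{\bar a}$ with $\bar a\in C(\overline{\R})$; so everything comes down to showing that $\{\phi^a\}$ separates the points of $\triangle$.

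To organize the separation I would pass to representing measures. For each $p\in\triangle$ the functional $a\mapsto\phi^a(p)$ is positive and unital on $C(\overline{\R})$, hence is given by integration against a Borel probability measure $\mu_p$ on $\overline{\R}$; by the Riesz representation theorem two points fail to be separated exactly when $\mu_p=\mu_q$, so point separation is equivalent to injectivity of $p\mapsto\mu_p$. Reading off the limit formulas, the measures split into three readily distinguishable types. On the outer path $\mathcal{P}$ consisting of the two lateral edges together with the apex, Lemmas \ref{lim-phi-infty-0}, \ref{lim-phi-infty-t0} and \ref{lim-phi-t0-infty} give $\mu_p=\delta_{r(p)}$, where $r(+\infty,t_2)=-1/(2\sqrt{t_2})$, $r(-\infty,t_2)=1/(2\sqrt{t_2})$ and $r(\text{apex})=0$; thus $r:\mathcal{P}\to\overline{\R}$ is a homeomorphism. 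On the open bottom edge, Lemma \ref{lim-gamma-x0-0} gives $\mu_{(x_0,0)}=c(x_0)\,\delta_{-\infty}+(1-c(x_0))\,\delta_{+\infty}$ with $c(x_0)=\int_{-\infty}^{x_0}(h_{j-1}(y))^2\,dy\in(0,1)$ strictly increasing in $x_0$. In the interior, the explicit integral shows that $\mu_{(t_1,t_2)}$ is the push-forward of the probability density $(h_{j-1})^2$ under the affine map $s\mapsto (s-t_1)/\sigma$ with $\sigma=2\sqrt{t_2(t_1^2+1)}$; in particular it is absolutely continuous on $\R$ and puts no mass at $\pm\infty$.

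With these three types in hand, most cases of injectivity are immediate: an interior measure is absolutely continuous with strictly positive density on all of $\R$, whereas every boundary measure is atomic, so interior points are separated from all boundary points; a single Dirac $\delta_{r}$ cannot equal a two-atom measure $c\,\delta_{-\infty}+(1-c)\,\delta_{+\infty}$ with $c\in(0,1)$, so path points are separated from bottom points; distinct $r$ give distinct Diracs, and distinct $x_0$ give distinct $c(x_0)$ and hence distinct bottom measures. The one genuinely computational case, and the part I expect to be the main obstacle, is separating two interior points. Here I would recover the affine parameters from the first two moments of $\mu_{(t_1,t_2)}$: because $h_{j-1}$ has definite parity, $(h_{j-1})^2$ is an even probability density, so its mean is $0$ and its variance $V=\int s^2(h_{j-1}(s))^2\,ds$ is finite and strictly positive; consequently $\mu_{(t_1,t_2)}$ has mean $-t_1/\sigma$ and variance $V/\sigma^2$. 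Since these moments are genuine absolutely convergent integrals over $\R$, equality of two interior measures forces equality of their means and variances; writing the common values as $m$ and $v>0$ we recover $\sigma=\sqrt{V/v}$, then $t_1=-m\sigma$, and finally $t_2=\sigma^2/\big(4(t_1^2+1)\big)$, so $(t_1,t_2)$ is uniquely determined and distinct interior points are separated.

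Having verified that $p\mapsto\mu_p$ is injective, the family $\{\phi^a\}$ separates the points of the compact Hausdorff space $\triangle$. The Stone--Weierstrass theorem then yields $\mathcal{B}=C(\triangle)$, and combined with the reduction in the first paragraph this shows that $T_a\mapsto\phi^a$ extends to an isometric $*$-isomorphism of the $C^*$-algebra generated by all $T_a$ onto $C(\triangle)$, as claimed.
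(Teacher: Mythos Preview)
Your argument is correct and complete; the reduction to Stone--Weierstrass is the same as the paper's, but your separation step is genuinely different. The paper proves point separation by exhibiting explicit symbols: it uses $a_1(s)=s/\sqrt{s^2+1}$ and $a_2(s)=1/(s^2+1)$ to separate boundary points (Lemma~\ref{separar1}), and a one-parameter family of piecewise-linear ramps $a_\alpha$ to handle interior and interior-vs-boundary cases (Lemmas~\ref{separar2}, \ref{separar3}). You instead recast $a\mapsto\phi^a(p)$ as integration against a probability measure $\mu_p$ on $\overline{\R}$ and prove injectivity of $p\mapsto\mu_p$ by sorting the measures into types (absolutely continuous in the interior; a two-atom mixture at $\pm\infty$ on the open bottom edge; a single Dirac along the lateral edges and apex) and, for the interior case, recovering $(t_1,t_2)$ from the first two moments using the evenness and positive variance of $(h_{j-1})^2$. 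The paper's approach buys concreteness---one sees exactly which symbols witness each separation---while yours is more conceptual and avoids the slightly delicate computation with the family $a_\alpha$; the moment argument is short and robust. One cosmetic point: the interior density $\sigma\,(h_{j-1}(\sigma y+t_1))^2$ vanishes at finitely many points, so ``strictly positive density on all of $\R$'' is a slight overstatement, but only absolute continuity is used, so the logic is unaffected.
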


\begin{proof}
The functions $\phi^{a}$ separate the points of $\triangle$ according to
Lemmas \ref{separar1}, \ref{separar2}, \ref{separar3} below.  
The Stone-Weierstrass theorem completes the proof.
\end{proof}

\begin{lem}\label{separar1}
Let $(t_1, t_2)$ and $(y_1, y_2)$ be distinct points of $\overline{\Pi}\setminus \Pi$,
where they do not belong simultaneously to $\overline{\R} \times \{+\infty \}$. 
Then there exists $a \in C(\overline{\R})$ such that
$\phi^{a}(t_1, t_2) \neq \phi^{a}(y_1, y_2)$.
\end{lem}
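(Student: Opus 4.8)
The plan is to exploit the explicit boundary values of $\phi^a$ furnished by Lemmas \ref{lim-gamma-x0-0}--\ref{lim-phi-t0-infty} and to reduce everything to elementary facts about $C(\overline{\R})$. First I would tabulate, for a boundary point $P\in\overline{\Pi}\setminus\Pi$, the value $\phi^a(P)$ as a functional of $a$, noting two qualitatively different behaviors. On the right edge $\{+\infty\}\times\overline{\R}_+$, the left edge $\{-\infty\}\times\overline{\R}_+$, the top edge $\overline{\R}\times\{+\infty\}$, and the two corners $(\pm\infty,0)$, Lemmas \ref{lim-phi-infty-0}, \ref{lim-phi-infty-t0} and \ref{lim-phi-t0-infty} give $\phi^a(P)=a(\xi(P))$ for a single point $\xi(P)\in\overline{\R}$: explicitly $\xi=-1/(2\sqrt{t_2})$ on the right edge, $\xi=1/(2\sqrt{t_2})$ on the left edge, $\xi=0$ on the entire top edge, and $\xi=\mp\infty$ at the corners $(\pm\infty,0)$. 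On the open bottom edge, by contrast, Lemma \ref{lim-gamma-x0-0} gives the convex combination $\phi^a(x_0,0)=p(x_0)\,a_-+(1-p(x_0))\,a_+$ with $p(x_0)=\int_{-\infty}^{x_0}(h_{j-1})^2$, which is \emph{not} a point evaluation. I call the first kind point-evaluation points and the second kind convex-combination points.

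The key structural observation, which I would verify next, is that the assignment $P\mapsto\xi(P)$ on point-evaluation points is injective off the top edge: the right-edge values sweep $(-\infty,0)$ bijectively, the left-edge values sweep $(0,+\infty)$ bijectively, the corners give $\pm\infty$, and only top-edge points give $\xi=0$. Hence two distinct point-evaluation points share the same $\xi$ precisely when both lie on $\overline{\R}\times\{+\infty\}$, which the hypothesis forbids. Thus, when both $P$ and $Q$ are of point-evaluation type, the associated points satisfy $\xi(P)\neq\xi(Q)$ in $\overline{\R}$, and since $C(\overline{\R})$ separates points of the compact space $\overline{\R}$, there is $a\in C(\overline{\R})$ with $\phi^a(P)=a(\xi(P))\neq a(\xi(Q))=\phi^a(Q)$.

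It remains to treat pairs involving the bottom edge. If both $P=(x_0,0)$ and $Q=(x_0',0)$ are convex-combination points with $x_0\neq x_0'$, I would use that $p$ is strictly increasing (because $(h_{j-1})^2$ is positive off a finite set), so $p(x_0)\neq p(x_0')$; then any $a$ with $a_-\neq a_+$ gives $\phi^a(P)-\phi^a(Q)=(a_--a_+)(p(x_0)-p(x_0'))\neq 0$. If $P$ is a convex-combination point and $Q$ a point-evaluation point with $\xi=\xi(Q)$, I split on $\xi$: when $\xi\in\R$ is finite I choose $a\in C(\overline{\R})$ with $a(\pm\infty)=0$ and $a(\xi)=1$ (a bump, e.g.\ $a(s)=e^{-(s-\xi)^2}$), so that $\phi^a(Q)=1$ while $\phi^a(P)=0$; when $\xi=\mp\infty$ the value $\phi^a(Q)$ equals $a_\mp$, and choosing $a$ with $a_-\neq a_+$ makes $\phi^a(Q)-\phi^a(P)$ equal to $(1-p)(a_--a_+)$ or $p(a_+-a_-)$, again nonzero. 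This exhausts all cases. The main obstacle, and the only place where genuine care is needed, is the convex-combination points on the bottom edge: there $\phi^a$ is a $p$-weighted average of the two limits $a_\pm$ rather than a point value, so one must exploit both the freedom to force $a_-\neq a_+$ and the strict monotonicity of $p(x_0)$ to separate these points from each other and from the point-evaluation points.
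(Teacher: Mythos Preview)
Your proof is correct and covers all cases. The key structural insight---splitting the boundary into ``point-evaluation'' points (the two vertical edges, the top edge, and the bottom corners, where $\phi^a(P)=a(\xi(P))$ for a single $\xi(P)\in\overline{\R}$) versus ``convex-combination'' points (the open bottom edge $\R\times\{0\}$, where $\phi^a(x_0,0)=p(x_0)a_-+(1-p(x_0))a_+$)---is clean, and your verification that $P\mapsto\xi(P)$ is injective off the top edge is accurate. The three subcases (both point-evaluation, both convex-combination, mixed) are handled correctly; in particular, in the mixed case with $\xi(Q)=\pm\infty$ you implicitly use $p(x_0)\in(0,1)$ for finite $x_0$, which follows from $(h_{j-1})^2>0$ a.e.

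The paper's own proof takes a more concrete route: rather than abstract case analysis, it exhibits two explicit symbols
\[
a_1(s)=\frac{s}{\sqrt{s^2+1}}\quad\text{and}\quad a_2(s)=\frac{1}{s^2+1},
\]
computes the boundary values of $\phi^{a_1}$ and $\phi^{a_2}$ directly, and checks that together they separate the required points (for instance, $\phi^{a_1}(t_1,0)=2\int_{t_1}^{\infty}(h_{j-1})^2-1$ is strictly decreasing, hence separates the bottom edge; $\phi^{a_2}$ vanishes on the bottom edge but is positive on the side edges and equals~$1$ on the top). Your approach is more conceptual and makes the mechanism transparent---it explains \emph{why} two symbols must suffice, namely because the point-evaluation map $\xi$ is injective off the top and because $p$ is strictly monotone---whereas the paper's proof has the virtue of giving explicit separating symbols that one could, in principle, compute with. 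Both arguments rest on exactly the same boundary-value lemmas; neither is deeper, they are simply organized differently.
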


\begin{proof}
First consider the nilpotent symbol  $a_{1}(s)= \frac{s}{\sqrt{s^{2}+1}}$, which is continuous on $\overline{\R}$. 
Note that
\begin{itemize}
	\item $\phi^{a_{1}}(\pm\infty, t_{2})=a_{1}\left(\mp\frac{1}{2\sqrt{t_{2}}}\right)= \mp \frac{1}{\sqrt{1+4t_{2}}}$ for $0<t_{2} \leq +\infty$.
	\item $\phi^{a_{1}}(\pm \infty, 0)=a_{1}(\mp \infty)=  \mp 1$.
	\item $\phi^{a_{1}}(t_1, +\infty)= a_{1}(0)= 0$ for $t_1\in \overline{\R}$.
	\item $\phi^{a_{1}}(t_{1}, 0)= -\int_{-\infty}^{t_{1}} (h_{j-1}(s_{1}))^{2} ds_{1}
	+   \int_{t_{1}}^{\infty} (h_{j-1}(s_{1}))^{2} ds_{1}$ for $t_{1} \in \R$.
\end{itemize}

From $\int_{-\infty}^{\infty} (h_{j-1}(s_{1}))^{2} ds_{1}=1$ we get
$$\phi^{a_{1}}(t_{1}, 0)  =  2 \int_{t_{1}}^{\infty} (h_{j-1}(s_{1}))^{2} ds_{1} - 1.$$
This formula also says that $\phi^{a_{1}}(\pm \infty, 0)=  \mp 1$. On the other hand, the Hermite function $h_{j-1}$
is continuous and it has just a finitely many roots. Hence, $\phi^{a_{1}}(t_{1}, 0)$ is monotonically decreasing
with respect to $t_1$. Thus, points in $\overline{\R} \times \{0\}$ are separated by $\phi^{a_{1}}$.

Recall that $\overline{\R} \times \{+\infty\}$ is identified with one point in $\triangle$,
so take $(-\infty,+\infty)$ as representative point of the equivalence class $\overline{\R} \times \{+\infty\}$.
For $t_2,\tilde{t}_2\in [0,\infty)$, the three points $(-\infty, t_2)$, $(-\infty, +\infty)$ and $(+\infty,\tilde{t}_2)$ 
are separated by $\phi^{a_{1}}(\pm\infty, t)$  because of the injective property of $1/\sqrt{1+4t}$.
 
Consider now the nilpotent symbol $a_{2}(s)=\frac{1}{s^{2} +1}$, which is continuous on $\overline{\R}$. 
We have
\begin{itemize}
	\item $\phi^{a_{2}}(\pm\infty, t_{2})=a_{2}\left(\mp\frac{1}{2\sqrt{t_{2}}}\right)= \frac{4t_{2}}{1+4t_{2}} >0$
	for $t_{2} >0$.
	\item $\phi^{a_{2}}(t_{1}, +\infty)= a_{2}(0)= 1$ for $t_1\in \overline{\R}$.
	\item $\phi^{a_{2}}(t_{1}, 0)=0$ for all $t_{1} \in \overline{\R}$.
\end{itemize}
Thus, $\phi^{a_{2}}$ separates each point $(t_{1}, 0)$ from the points $(\pm\infty, t_{2})$ and $(t_{1}, +\infty)$.
\end{proof}

Now our aim is  to separate the points of $\Pi$. 
Consider the following family of continuous functions:
\begin{equation}\label{a-alpha}
a_{\alpha}(s)= \left\{ 
\begin{array}{lcc}
0 &  \text{si}  & -\infty \leq s\leq -\alpha, \\
\frac{1}{\alpha}s + 1 &  \text{si} & -\alpha \leq s\leq  0,\\
1 &   \text{si}  & 0 \leq s\leq +\infty.
\end{array}
\right.
\end{equation}
Then $\phi^{a_{\alpha}}(t_{1}, t_{2})=\psi^{\alpha}(t_1,t_2)+\varphi(t_1)$, where 

$$\psi^{\alpha}(t_1,t_2)=\int_{t_1-2\alpha\sqrt{t_{2}(t_{1}^{2}+ 1)}}^{t_{1}} 
\left(\frac{s_{1}- t_{1}}{2\alpha\sqrt{t_{2}(t_{1}^{2}+ 1)}} +1 \right)(h_{j-1}(s_{1}))^{2} ds_{1},$$
\begin{equation}\label{part-espectral}
\varphi(t_1)= \int_{t_{1}}^{\infty} (h_{j-1}(s_{1}))^{2} ds_{1}.
\end{equation}

\begin{lem}\label{separar2}
	If $(t_1,t_2)$ and $(y_1,y_2)$ are distinct points in $\Pi$, then there exists $\alpha>0$ such that
	$\phi^{a_\alpha}(t_1,t_2) \neq \phi^{a_\alpha}(y_1,y_2)$, where $a_{\alpha}$ is defined in (\ref{a-alpha}).
\end{lem}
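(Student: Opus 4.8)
The plan is to separate the two cases $t_1\neq y_1$ and $t_1=y_1$, using throughout the decomposition $\phi^{a_\alpha}=\psi^\alpha+\varphi$ stated just above, together with the fact that the Hermite function $h_{j-1}$ is real-analytic (a polynomial times a Gaussian) and hence has only finitely many zeros, each of finite order. Suppose first that $t_1\neq y_1$, and let $\alpha\to 0^+$. Since the ramp factor in the integrand of $\psi^\alpha$ lies in $[0,1]$ and the integration interval has length $2\alpha\sqrt{t_2(t_1^2+1)}$, we get $0\le\psi^\alpha(t_1,t_2)\le\int_{t_1-2\alpha\sqrt{t_2(t_1^2+1)}}^{t_1}(h_{j-1}(s_1))^2\,ds_1\to 0$, and likewise $\psi^\alpha(y_1,y_2)\to 0$. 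Hence $\phi^{a_\alpha}(t_1,t_2)\to\varphi(t_1)$ and $\phi^{a_\alpha}(y_1,y_2)\to\varphi(y_1)$. Because $\varphi'(t_1)=-(h_{j-1}(t_1))^2$ vanishes only on the finite zero set of $h_{j-1}$, the function $\varphi$ is strictly decreasing, so $\varphi(t_1)\neq\varphi(y_1)$; thus $\phi^{a_\alpha}(t_1,t_2)-\phi^{a_\alpha}(y_1,y_2)$ has a nonzero limit and is nonzero for all sufficiently small $\alpha>0$.

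The remaining case $t_1=y_1=:\tau$ with $t_2\neq y_2$ is where I expect the main difficulty, since now the $\varphi$-terms cancel and everything reduces to the two-variable part $\psi^\alpha$, whose dependence on $t_2$ enters only through the ramp width. Writing $\beta=2\sqrt{\tau^2+1}$, a substitution shows that for fixed $\tau$ one has $\psi^\alpha(\tau,t_2)=g(w)$ with $w=\alpha\beta\sqrt{t_2}$, where $g(w)=\int_{\tau-w}^{\tau}\frac{s_1-\tau+w}{w}(h_{j-1}(s_1))^2\,ds_1$. Although the two widths $\alpha\beta\sqrt{t_2}$ and $\alpha\beta\sqrt{y_2}$ are distinct for every $\alpha>0$, the map $g$ need not be injective, so one cannot conclude directly; a genuine small-$w$ asymptotic analysis of $g$ is required, and this is the crux of the argument.

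To finish, I would expand $g$ for small $w$. If $h_{j-1}$ vanishes to order $m\ge 0$ at $\tau$ (with $m=0$ meaning $h_{j-1}(\tau)\neq 0$), write $h_{j-1}(s_1)=c(s_1-\tau)^m(1+O(s_1-\tau))$ with $c\neq 0$; substituting $s_1=\tau-wx$ gives $g(w)=c^2 w^{2m+1}\int_0^1(1-x)x^{2m}\,dx+o(w^{2m+1})$, so $g(w)\sim Cw^{2m+1}$ as $w\to 0^+$ with $C=\frac{c^2}{(2m+1)(2m+2)}>0$ in every case, whether or not $\tau$ is a zero. Therefore $g(\alpha\beta\sqrt{t_2})-g(\alpha\beta\sqrt{y_2})=C(\alpha\beta)^{2m+1}\big(t_2^{(2m+1)/2}-y_2^{(2m+1)/2}\big)(1+o(1))$ as $\alpha\to 0^+$, and since $x\mapsto x^{(2m+1)/2}$ is strictly increasing on $(0,\infty)$ while $t_2\neq y_2$, the bracketed factor is nonzero. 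Hence $\phi^{a_\alpha}(\tau,t_2)\neq\phi^{a_\alpha}(\tau,y_2)$ for all sufficiently small $\alpha>0$, which completes the separation of the points of $\Pi$.
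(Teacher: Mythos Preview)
Your proof is correct. For the case $t_1\neq y_1$ you do exactly what the paper does: let $\alpha\to 0^+$, use $\psi^\alpha\to 0$, and invoke the strict monotonicity of $\varphi$.

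In the case $t_1=y_1=\tau$, however, you take a more laborious route than the paper, and your justification for doing so is actually mistaken. You write that ``the map $g$ need not be injective, so one cannot conclude directly''; but in fact $g$ \emph{is} strictly increasing on $(0,\infty)$, and this is precisely what the paper exploits. The paper fixes $\alpha=\frac{1}{2\sqrt{\tau^2+1}}$ (so that the width becomes simply $\sqrt{t_2}$) and observes that for $w_1<w_2$ the integrand $\frac{s_1-\tau+w_2}{w_2}$ dominates $\frac{s_1-\tau+w_1}{w_1}$ pointwise on $[\tau-w_1,\tau]$ (because $s_1-\tau\le 0$ and $1/w_2<1/w_1$), while the interval $[\tau-w_2,\tau-w_1]$ contributes nonnegatively; since $(h_{j-1})^2>0$ off a finite set, the inequality is strict. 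Hence $\psi^\alpha(\tau,t_2)<\psi^\alpha(\tau,y_2)$ for this $\alpha$ (indeed for every $\alpha>0$), with no asymptotics needed.

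Your asymptotic expansion $g(w)\sim Cw^{2m+1}$ is correct and does the job, but it is heavier machinery than required; the paper's monotonicity argument is a two-line computation. What your approach buys is an explicit leading-order description of $g$ near $0$, which could be useful if one wanted quantitative separation, but for the bare separation statement it is unnecessary.
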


\begin{proof}
At first suppose that $(t_{1}, t_{2})$ and $(y_{1}, y_{2})$ satisfy  $t_{1} < y_{1}$. 
Introduce $k:=\varphi(t_1)-\varphi(y_1)>0$.
It is easy to see that $\psi^{\alpha}(t_1,t_2)$ can be written as
$$\psi^{\alpha}(t_1,t_2)=\int_{-1}^{0}\left(s +1 \right)[h_{j-1}(t_1+2\alpha \sqrt{t_2(t_1^2+1)}s)]^{2} 2\alpha\sqrt{t_2(t_1^2+1)} ds.$$
This integral representation allow us to prove easily  that
$$\lim_{\alpha\rightarrow 0^+} \psi^{\alpha}(t_1,t_2)=0.$$

Take $\alpha$ small enough that $|\psi^{\alpha}(t_1,t_2)-\psi^{\alpha}(y_1,y_2)|<k/2$.
Then
\begin{eqnarray*}
	|\phi^{a_{\alpha}}(t_{1}, t_{2})-\phi^{a_{\alpha}}(y_{1}, y_{2})|
	&=& |k+\psi^{\alpha}(t_{1}, t_{2})-\psi^{\alpha}(y_{1}, y_{2})| \\
	& \geq & |k-|\psi^{\alpha}(t_{1}, t_{2})-\psi^{\alpha}(y_{1}, y_{2})| | \\
	&\geq & k- \frac{k}{2}>0.
\end{eqnarray*}

Now suppose that $t_1=y_1=t$ and $t_2<y_2$. Let $\alpha = \frac{1}{2\sqrt{t^2+1}} >0$. Then
$$\psi^{\alpha}(t,t_2)=\int_{t-\sqrt{t_{2}}}^{t} 
\left(\frac{s_{1}- t}{\sqrt{t_{2}}} +1 \right)(h_{j-1}(s_{1}))^{2} ds_{1},$$
$$\psi^{\alpha}(t,y_2)=\int_{t-\sqrt{y_{2}}}^{t} 
\left(\frac{s_{1}- t}{\sqrt{y_{2}}} +1 \right)(h_{j-1}(s_{1}))^{2} ds_{1}.$$

Since $0<t_2 < y_2$, we have $t-\sqrt{y_2} < t- \sqrt{t_2}$. Besides, 
the following inequality holds
$$0<\frac{s_1 -t}{\sqrt{t_2}} +1  <\frac{s_1 -t}{\sqrt{y_2}} +1$$
for  $t-\sqrt{t_2} < s_1 < t$.
Hence $\psi^{\alpha}(t,t_2) < \psi^{\alpha}(t,y_2)$. Finally, from
$\varphi(t_1)= \varphi(y_1)= \varphi(t)$
we get $\phi^{a_{\alpha}}(t, t_{2}) < \phi^{a_{\alpha}}(t, y_{2})$.
\end{proof}

\begin{lem}\label{separar3}
Let $(t_1, t_2) \in \Pi$ and $(y_1,y_2)\in \overline{\Pi}\setminus \Pi$. Then there exists $\alpha>0$ such that
$\phi^{a_\alpha}(t_1,t_2) \neq \phi^{a_\alpha}(y_1,y_2)$, where $a_{\alpha}$
is given in (\ref{a-alpha}).
\end{lem}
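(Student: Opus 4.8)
The plan is to separate an interior point $(t_1,t_2)\in\Pi$ from a boundary point $(y_1,y_2)\in\overline{\Pi}\setminus\Pi$ by exhibiting, for a suitable $\alpha>0$, a gap between $\phi^{a_\alpha}(t_1,t_2)$ and the known boundary value $\phi^{a_\alpha}(y_1,y_2)$. The key observation is that the boundary values of $\phi^{a_\alpha}$ are already pinned down by the limit lemmas established earlier, whereas at an interior point the value $\phi^{a_\alpha}(t_1,t_2)=\psi^\alpha(t_1,t_2)+\varphi(t_1)$ can be tuned through the free parameter $\alpha$. Concretely, I would first record the boundary evaluations of $\phi^{a_\alpha}$ using Lemmas \ref{lim-gamma-x0-0}, \ref{lim-phi-infty-0}, \ref{lim-phi-infty-t0} and \ref{lim-phi-t0-infty}: on $\overline{\R}\times\{0\}$ the value is $a_\alpha(-\infty)\int_{-\infty}^{y_1}(h_{j-1})^2+a_\alpha(+\infty)\int_{y_1}^{\infty}(h_{j-1})^2=\varphi(y_1)$ (since $a_\alpha(-\infty)=0$, $a_\alpha(+\infty)=1$); at $(\pm\infty,y_2)$ with $0<y_2<\infty$ it is $a_\alpha(\mp 1/(2\sqrt{y_2}))$; along $\overline{\R}\times\{+\infty\}$ it is $a_\alpha(0)=1$; and at $(\pm\infty,0)$ it equals $a_\alpha(\mp\infty)$, i.e. $0$ or $1$.

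The strategy then splits according to which face of the boundary $(y_1,y_2)$ lies on. Since $\lim_{\alpha\to 0^+}\psi^\alpha(t_1,t_2)=0$ (the integral representation over $[-1,0]$ in the proof of Lemma \ref{separar2} makes this transparent), we have $\phi^{a_\alpha}(t_1,t_2)\to\varphi(t_1)$ as $\alpha\to 0^+$. Thus for small $\alpha$ the interior value is close to $\varphi(t_1)=\int_{t_1}^\infty(h_{j-1})^2$, a number strictly between $0$ and $1$. I would separate $(t_1,t_2)$ from boundary points whose $\phi^{a_\alpha}$-value is forced to be $0$ or $1$ (namely the points $(\pm\infty,0)$ and the whole edge $\overline{\R}\times\{+\infty\}$) by simply taking $\alpha$ small, exploiting $0<\varphi(t_1)<1$. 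For a bottom-edge point $(y_1,0)$ with boundary value $\varphi(y_1)$, if $y_1\neq t_1$ then $\varphi(t_1)\neq\varphi(y_1)$ by strict monotonicity of $\varphi$ (here $h_{j-1}$ has only finitely many zeros), and small $\alpha$ finishes it; if $y_1=t_1$ then I compare $\varphi(t_1)+\psi^\alpha(t_1,t_2)$ with $\varphi(t_1)$ and use $\psi^\alpha(t_1,t_2)>0$ for every $\alpha>0$, which gives strict separation at any fixed $\alpha$.

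The remaining and most delicate case is a side-edge point $(\pm\infty,y_2)$ with $0<y_2<\infty$, whose boundary value $a_\alpha(\mp 1/(2\sqrt{y_2}))$ varies with $\alpha$; here taking $\alpha\to 0^+$ does not obviously help because the target itself moves. I expect this to be the main obstacle. The plan is to control both quantities simultaneously as functions of $\alpha$: for very small $\alpha$ the ramp of $a_\alpha$ is steep, so $a_\alpha(\mp 1/(2\sqrt{y_2}))$ equals exactly $0$ or $1$ once $\alpha<1/(2\sqrt{y_2})$, while $\phi^{a_\alpha}(t_1,t_2)$ stays near $\varphi(t_1)\in(0,1)$; choosing $\alpha$ small enough to make the side-edge value land on $0$ or $1$ while keeping $|\phi^{a_\alpha}(t_1,t_2)-\varphi(t_1)|$ below $\min\{\varphi(t_1),1-\varphi(t_1)\}$ yields the gap. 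In every subcase the separation reduces to a single inequality obtained by fixing one convenient $\alpha$, so once the boundary values are tabulated and the limit $\psi^\alpha\to 0$ is invoked, the verification is routine.
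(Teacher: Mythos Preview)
Your proof is correct and rests on the same ingredients as the paper's: the decomposition $\phi^{a_\alpha}=\psi^\alpha+\varphi$, the positivity $\psi^\alpha>0$, the limit $\psi^\alpha\to 0$ as $\alpha\to 0^+$, and the boundary evaluations coming from Lemmas \ref{lim-gamma-x0-0}--\ref{lim-phi-t0-infty}. The case splits differ only cosmetically on the bottom edge (you treat $y_1=t_1$ versus $y_1\neq t_1$; the paper treats $t_1\le y_1$ versus $t_1>y_1$, using $\psi^\alpha>0$ directly in the first subcase and small $\alpha$ in the second).

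There is one genuine difference worth noting. For the side edge $(+\infty,y_2)$ with $0<y_2<\infty$, the paper abandons the family $a_\alpha$ and introduces the shifted symbol $a(s)=a_\alpha(s-\alpha)$ to force $\phi^{a}(+\infty,y_2)=0$. Your observation that $a_\alpha(-1/(2\sqrt{y_2}))=0$ as soon as $\alpha<1/(2\sqrt{y_2})$ lets you stay inside the family $\{a_\alpha\}$ and thus matches the lemma's statement more literally. (For $(-\infty,y_2)$ no condition on $\alpha$ is needed, since $a_\alpha(1/(2\sqrt{y_2}))=1$ for every $\alpha$; and your bound $|\phi^{a_\alpha}(t_1,t_2)-\varphi(t_1)|<\min\{\varphi(t_1),1-\varphi(t_1)\}$ could be replaced by the paper's uniform estimate $0<\phi^{a_\alpha}(t_1,t_2)<1$, valid for all $\alpha>0$, but your version works just as well.)
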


\begin{proof}
Take $(t_{1}, t_{2})\in \Pi$, $(y_{1}, 0)\in \mathbb{R}\times \{0 \}$ and $\alpha >0$. It is easy to see that  
 $\psi^{a_\alpha}(t_1,t_2) >0$. 
 Then we have
$\phi^{a_{\alpha}}(y_{1},0)=\varphi(y_1) \leq \varphi(t_1) < \phi^{a_{\alpha}}(t_{1}, t_{2})$ for $t_1 \leq y_1$.
Now suppose that $y_{1}< t_{1}$. Let $k:=\varphi(y_1)-\varphi(t_1)>0$. The inequality  
$\psi^{\alpha}(t_1,t_2) < k$ holds for $\alpha>0$ small enough. For such $\alpha$,
$$ 	\phi^{a_{\alpha}}(t_{1}, t_{2})=\psi^{\alpha}(t_1,t_2) + \varphi(t_1) < \varphi(y_1)
=  \phi^{a_{\alpha}}(y_{1},0).$$
This proves that all points of $\Pi$ can be separated from points of $\R \times \{0 \}$.
On the other hand,
\begin{eqnarray*}
	0<\phi^{a_{\alpha}}(t_{1}, t_{2})
	&=& \psi^{\alpha}(t_1,t_2)+\varphi(t_1)\\
	&\leq&  \int_{t_1-2\alpha\sqrt{t_{2}(t_{1}^{2}+ 1)}}^{t_{1}} (h_{j-1}(s_{1}))^{2} ds_{1} 
	+ \varphi(t_1)\\
	&=&  \int_{t_1-2\alpha\sqrt{t_{2}(t_{1}^{2}+ 1)} }^{\infty} (h_{j-1}(s_{1}))^{2} ds_{1}\\
	&<& 1 .
\end{eqnarray*}

For $y_{2} \in \overline{\R}_+$ we have
$\phi^{a_{\alpha}}(- \infty, y_{2}) = a_{\alpha}(\frac{1}{2\sqrt{y_{2}}})= 1$,  hence 
$\phi^{a_{\alpha}}(t_{1}, t_{2}) \neq \phi^{a_{\alpha}}(- \infty, y_{2}).$

Finally, consider the function $a(s):=a_{\alpha}(s-\alpha)$. 
Then $\phi^{a}(+\infty, y_{2})=a(-\frac{1}{2\sqrt{y_2}})=0$, and 
consequently $\phi^{a}(t_{1}, t_{2}) \neq \phi^{a}(+\infty, y_{2}) $.
\end{proof}


\section{Toeplitz operators with piecewise-continuous symbols $a(\text{Im}\, \zeta_1)$.}\label{symb-discont}

Take a finite subset $S=\{\beta_0,...,\beta_m \}\subset \R$  and let $PC(\overline{\R},S)$ be the set of 
functions continuous on $\overline{\R}\setminus S$  and having one-side limit values at each point of $S$.
In this section we study  the $C^*$-algebra generated by all Toeplitz operators $T_a$, where $a(\im \zeta_1)\in PC(\overline{\R},S)$.
Obviously, we have to study the algebra generated by the spectral functions $\phi^a$.
To begin with, take the indicator function $\chi_+=\chi_{[0, +\infty]}$. Then
the spectral function $\phi^{\chi_+}$ is continuous on $\overline{\Pi}\setminus (\overline{\R}\times \{+\infty \})$
according to Lemmas  \ref{lim-gamma-x0-0},  \ref{lim-phi-infty-0} and \ref{lim-phi-infty-t0}.
Actually,
\begin{eqnarray*}
	\phi^{\chi_+}(t_1, t_2) &=& \int_{-\infty}^{\infty} \chi_{[0, +\infty]}\left(\frac{-t_{1}+s_{1}}{2\sqrt{t_{2}(t_{1}^{2} +1)}}\right) (h_{k-1}(s_1))^2 ds_1\\
	&=&  \varphi(t_1),
\end{eqnarray*}
where $\varphi$ is defined in (\ref{part-espectral}). 
Hence $\phi^{\chi_+}$ is continuous on $\overline{\Pi}$ because $\varphi$ is.
Of course we have now a spectral function which is not constant along $\overline{\R}\times \{+\infty \}$ anymore.

For any  $a \in PC(\overline{\R},\{0 \})$ we have
$$a(s)=\tilde{a}(s)+[a(0_+)-a(0_-)]\chi_+(s),$$
where $a(0_-)$ and $a(0_+)$ are the one-side limits of $a$ at $0$, and $\tilde{a}(s)= a(s)+[a(0_-)-a(0_+)]\chi_+(s)$.
This function has a removable discontinuity at $0$, thus
$\phi^a$ is continuous on $\overline{\Pi}$.

\begin{thm}\label{Topelitz-0}
	The $C^*$ algebra generated by all Toeplitz operators $T_a$, with $a(\im \zeta_1) \in PC(\overline{\R}, \{0\})$, 
	is isomorphic and isometric  to $C(\overline{\Pi})$. The isomorphism is defined on the generators by the rule
	$$T_a \mapsto \phi^a.$$
\end{thm}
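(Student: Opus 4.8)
The plan is to identify the target $C^*$-algebra with a $C^*$-subalgebra of $C(\overline{\Pi})$ and then invoke the Stone--Weierstrass theorem. By Theorem \ref{equiv-unitaria}, each $T_a$ is unitarily equivalent to multiplication by $\gamma^a$ on $L^2(\R\times\R_+)$, and since $\Phi$ is a homeomorphism of $\Pi$, the induced change of variables is a unitary carrying $\gamma^a I$ to $\phi^a I$. Hence the $C^*$-algebra generated by the $T_a$ is $*$-isomorphic to the $C^*$-algebra generated by the multiplication operators $\phi^a I$ in $\mathcal{B}(L^2(\R\times\R_+))$. Because each $\phi^a$ is continuous on the compact set $\overline{\Pi}$ (established above for $a\in PC(\overline{\R},\{0\})$ via the splitting $a=\tilde a+[a(0_+)-a(0_-)]\chi_+$) and $\Pi$ is dense in $\overline{\Pi}$, the operator norm of $\phi^a I$ equals $\|\phi^a\|_\infty=\sup_{\overline{\Pi}}|\phi^a|$; thus $\phi^a I\mapsto \phi^a$ is isometric, and the generated $C^*$-algebra is isometrically $*$-isomorphic to the closed subalgebra $\mathcal{A}\subset C(\overline{\Pi})$ generated by $\{\phi^a : a\in PC(\overline{\R},\{0\})\}$, with $T_a\mapsto\phi^a$.

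It then remains to prove $\mathcal{A}=C(\overline{\Pi})$. The algebra $\mathcal{A}$ contains the constants, since $a\equiv 1$ gives $\phi^1\equiv 1$, and it is closed under complex conjugation, since the kernel $(h_{j-1})^2$ is real and nonnegative, whence $\overline{\phi^a}=\phi^{\bar a}$ with $\bar a\in PC(\overline{\R},\{0\})$. By Stone--Weierstrass it therefore suffices to show that the generators $\phi^a$ separate the points of $\overline{\Pi}$.

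For the separation I would split into two cases. If two distinct points of $\overline{\Pi}$ do not both lie on the top edge $\overline{\R}\times\{+\infty\}$, then they are already separated by a continuous symbol: every $a\in C(\overline{\R})$ lies in $PC(\overline{\R},\{0\})$, and Lemmas \ref{separar1}, \ref{separar2} and \ref{separar3} together separate exactly those pairs of distinct points of $\overline{\Pi}$ that are not simultaneously in $\overline{\R}\times\{+\infty\}$ (equivalently, the distinct points of the quotient $\triangle$). The one remaining case, two distinct points $(t_1,+\infty)$ and $(t_1',+\infty)$ on the top edge, is handled by the genuinely discontinuous generator $\chi_+$: since $\phi^{\chi_+}(t_1,t_2)=\varphi(t_1)=\int_{t_1}^{\infty}(h_{j-1}(s_1))^2\,ds_1$ is independent of $t_2$ and strictly decreasing in $t_1$ (because $h_{j-1}$ has only finitely many zeros), we get $\phi^{\chi_+}(t_1,+\infty)\neq\phi^{\chi_+}(t_1',+\infty)$ whenever $t_1\neq t_1'$. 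Thus every pair of distinct points of $\overline{\Pi}$ is separated, and Stone--Weierstrass yields $\mathcal{A}=C(\overline{\Pi})$.

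The conceptual heart of the argument, and the only place where the continuous theory genuinely fails, is precisely this last case. For $a\in C(\overline{\R})$ the spectral function $\phi^a$ is forced to be constant along $\overline{\R}\times\{+\infty\}$ by Lemma \ref{lim-phi-t0-infty}, which is why the continuous symbols only yield $C(\triangle)$; the jump of $\chi_+$ at the origin is exactly what produces a spectral function varying along the top edge, thereby removing the obstruction. The main technical point to secure beforehand is thus the global continuity of $\phi^a$ on $\overline{\Pi}$ for all $a\in PC(\overline{\R},\{0\})$, which reduces through the splitting above to the continuity of $\phi^{\chi_+}=\varphi$ (immediate) and to the continuous case applied to the symbol $\tilde a$ with its removable discontinuity; once continuity and separation are in hand, the isometry and the Stone--Weierstrass step are routine.
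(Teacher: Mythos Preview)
Your proof is correct and follows essentially the same approach as the paper: a Stone--Weierstrass argument in which the continuous symbols (via Lemmas \ref{separar1}--\ref{separar3}) handle all separations except along the top edge $\overline{\R}\times\{+\infty\}$, and the new generator $\phi^{\chi_+}=\varphi$ supplies the missing separation there. The paper organizes the case split a bit differently---using $\phi^{\chi_+}$ to separate any pair with distinct first coordinates and then the inequality $\phi^{a_\alpha}(t_1,0)<\phi^{a_\alpha}(t_1,t_2)<\phi^{a_\alpha}(t_1,+\infty)$ for pairs with the same finite $t_1$---but the content is the same, and your version is in fact slightly more explicit about covering the vertical edges $\{\pm\infty\}\times\overline{\R}_+$.
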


\begin{proof}
	If  $t_1\neq y_1$, the function $\phi^{\chi_+}$ separates any two points $(t_1,t_2), \ (y_1,y_2) \in \overline{\Pi}$.
	By Lemma \ref{separar2}, two points in $\Pi$ can be separated by  $\phi^{a_{\alpha}}$, where 
	$a_{\alpha}$ is the nilpotent symbol given in (\ref{a-alpha}). Further,
	$$\phi^{a_{\alpha}}(t_1,0)=\varphi(t_1)< \phi^{a_{\alpha}}(t_1,t_2)<1=a_{\alpha}(0)=\phi^{a_{\alpha}}(t_1,+\infty).$$
\end{proof}

We continue our study by introducing another point of discontinuity. Take the indicator function
$\chi_\beta:=\chi_{[\beta/2, +\infty] }(s)$, where $\beta >0$. We have
\begin{eqnarray*}
	\phi^{\chi_\beta}(t_1, t_2) 
	&=& \int_{-\infty}^{\infty} \chi_{[\beta/2, +\infty]}\left(\frac{-t_{1}+s_{1}}{2\sqrt{t_{2}(t_{1}^{2} +1)}}\right) (h_{k-1}(s_1))^2 ds_1\\
	&=&  \int_{t_1+\beta \sqrt{t_{2}(t_{1}^{2}+ 1)}}^{\infty} (h_{k-1}(s_1))^2 ds_1.
\end{eqnarray*}
According to Lemmas \ref{lim-gamma-x0-0},  \ref{lim-phi-infty-0}, \ref{lim-phi-infty-t0} and \ref{lim-phi-t0-infty}, 
the spectral function $\phi^{\chi_\beta}$ is continuous on $\overline{\Pi}$, except at the point $(-\infty,1/\beta^2)$.
For $\lambda_0 \in \mathbb{R}$,  $\phi^{\chi_\beta}$ takes the constant value $\varphi(\lambda_0)$ along
the curve $t_1+\beta \sqrt{t_{2}(t_{1}^{2}+ 1)}=\lambda_0$. From this equation we get
\begin{equation}\label{level-chi-beta}
t_2=f(t_1)=\frac{(t_1-\lambda_0)^2}{\beta^2(t_1^2+1)}, \qquad t_1\leq \lambda_0.
\end{equation}
The horizontal line $t_2=1/\beta^2$ is an asymptote of the graph of $t_2=f(t_1)$,
 and of course $\phi^{\chi_\beta} (t_1, f(t_1)) = \varphi(\lambda_0)$ for $t_1\leq \lambda_0.$
Thus, the level curves of $\phi^{\chi_\beta}$ converge to the point $(-\infty,1/\beta^2)$, 
our aim is to separate them at $(-\infty,º/\beta^2)$  through a mapping $\Upsilon_{\beta}$ in such a way that 
$\phi^{\chi_{\beta}} \circ \Upsilon_{\beta}^{-1}$ is continuous on $\overline{\Pi}$.

\begin{lem}\label{Phi-2}
	Let $g: \overline{\R}\rightarrow [-1/2,1/2]$ be any bijective, smooth and increasing function,
	with $g(0)=0$.
	Take $\beta>0$, and let $\Upsilon_{\beta}$ be  the function on $\Pi$ defined by the rule
	\begin{equation}\label{Phi-beta}
		\Upsilon_{\beta}(t_1,t_2)=(t_1,t_2[1+g(\lambda(t_1,t_2))]),
	\end{equation}
	where $\lambda(t_1,t_2)=t_1+\beta\sqrt{t_2(t_1^2+1)}$.
	Then $\Upsilon_{\beta}$ is an  homeomorphism from $\Pi$ onto itself, which can be continuously extended to
	$\overline{\Pi} \setminus \{(-\infty, 1/\beta^2)\}$ with range 
	$\overline{\Pi}\setminus (\{-\infty\} \times J_{\beta})$, where $J_{\beta}=\{y \ : \  .5/\beta^2 \leq y \leq 1.5/\beta^2 \}$.
\end{lem}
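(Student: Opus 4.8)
The plan is to exploit the skew structure of $\Upsilon_{\beta}$: it fixes the first coordinate and only deforms the second, so everything reduces to the scalar function $\lambda(t_1,t_2)=t_1+\beta\sqrt{t_2(t_1^2+1)}$ and the one-variable fiber map. Write $M(t_1,t_2)=t_2[1+g(\lambda(t_1,t_2))]$, so that $\Upsilon_\beta(t_1,t_2)=(t_1,M(t_1,t_2))$. Since $g$ takes values in $[-1/2,1/2]$, the factor $1+g(\lambda)$ lies in $[1/2,3/2]$; hence $M>0$ whenever $t_2>0$, and $\Upsilon_\beta$ maps $\Pi$ into $\Pi$. To prove it is a homeomorphism of $\Pi$, I would first fix $t_1$ and record $\partial_{t_2}\lambda=\beta\sqrt{t_1^2+1}/(2\sqrt{t_2})>0$, whence
$$\partial_{t_2}M=[1+g(\lambda)]+t_2\,g'(\lambda)\,\partial_{t_2}\lambda\geq \tfrac12>0,$$
because $g'\geq 0$. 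Thus each fiber map $t_2\mapsto M(t_1,t_2)$ is strictly increasing; combined with $M\to 0$ as $t_2\to 0^+$ and $M\geq t_2/2\to+\infty$ as $t_2\to+\infty$, it is a bijection of $\R_+$ onto $\R_+$, so $\Upsilon_\beta$ is a bijection of $\Pi$. As the Jacobian determinant equals $\partial_{t_2}M\geq 1/2$ and never vanishes, the inverse function theorem promotes this bijection to a diffeomorphism of $\Pi$, in particular a homeomorphism.

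For the extension the decisive step is to extend $\lambda$ continuously to $\overline{\Pi}\setminus\{(-\infty,1/\beta^2)\}$ with values in $\overline{\R}$. On the interior $\lambda$ is continuous; near the bottom and top edges the dominant term forces $\lambda\to t_1^0$ (finite) and $\lambda\to+\infty$ respectively, and on the right edge $\lambda\to+\infty$. For the left edge I would use the exact factorization $\lambda=t_1\left[1-\beta\sqrt{t_2}\sqrt{1+t_1^{-2}}\right]$, valid for $t_1<0$: as $t_1\to-\infty$ the bracket tends to $1-\beta\sqrt{t_2}$, so $\lambda\to-\infty$ when $t_2<1/\beta^2$ and $\lambda\to+\infty$ when $t_2>1/\beta^2$, uniformly on neighborhoods of each such boundary point (so the limits are genuinely path-independent); the four corners are checked the same way and are consistent with the adjacent edges. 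At $(-\infty,1/\beta^2)$ the bracket vanishes and $\lambda$ has no limit (along $t_2\equiv 1/\beta^2$ one gets $\lambda\to 0$, while other approaches realize any value in $\overline{\R}$), which is exactly why this point is deleted. Extending $g$ to $\overline{\R}$ by $g(\pm\infty)=\pm 1/2$, the composite $g\circ\overline{\lambda}$ is continuous; and since $1+g$ stays in $[1/2,3/2]$ (bounded away from $0$ and $\infty$, so no $0\cdot\infty$ ambiguity arises in the product $t_2[1+g]$ on $\overline{\R}_+$), the map $\overline{\Upsilon_\beta}(t_1,t_2)=(t_1,t_2[1+g(\overline{\lambda}(t_1,t_2))])$ is a continuous extension to $\overline{\Pi}\setminus\{(-\infty,1/\beta^2)\}$.

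Finally I would read off the range from the boundary values. The bottom and top edges map identically to themselves, and the right edge maps by $(+\infty,t_2)\mapsto(+\infty,3t_2/2)$, sweeping out all of $\{+\infty\}\times\overline{\R}_+$; together with $\Upsilon_\beta(\Pi)=\Pi$ these recover every point of $\overline{\Pi}$ whose first coordinate is finite or $+\infty$. On the left edge the two regimes give $(-\infty,t_2)\mapsto(-\infty,t_2/2)$ for $t_2<1/\beta^2$ and $(-\infty,t_2)\mapsto(-\infty,3t_2/2)$ for $t_2>1/\beta^2$, so the attained second coordinates fill $[0,1/(2\beta^2))\cup(3/(2\beta^2),+\infty]$ and miss precisely the closed interval $J_\beta=[1/(2\beta^2),3/(2\beta^2)]$; since the value $-\infty$ in the first coordinate can only come from $t_1=-\infty$, no other part of the domain recovers these points, so the range is exactly $\overline{\Pi}\setminus(\{-\infty\}\times J_\beta)$. (One sees here that $J_\beta$ is nothing but the set of values $(1/\beta^2)(1+g(\ell))$, $\ell\in\overline{\R}$, the limit values the second coordinate would have to take at the deleted point.) The main obstacle is exactly this left-edge analysis through the cancellation $1-\beta\sqrt{t_2}=0$: one must control $\lambda$ uniformly rather than merely along rays, confirm that the boundary limits away from $(-\infty,1/\beta^2)$ are path-independent so that the extension is genuinely continuous into the compact space $\overline{\Pi}$, and verify that the gap is the \emph{closed} interval $J_\beta$ with both endpoints excluded from the range.
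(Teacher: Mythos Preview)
Your proof is correct and follows essentially the same route as the paper: fiber-wise monotonicity of $t_2\mapsto t_2[1+g(\lambda)]$ to get bijectivity on $\Pi$, the inverse function theorem for smoothness of the inverse, and the factorization $\lambda=t_1\bigl[1-\beta\sqrt{t_2}\sqrt{1+t_1^{-2}}\bigr]$ (equivalently $|t_1|[-1+\beta\sqrt{t_2(1+t_1^{-2})}]$) to analyze the left edge and isolate the singular point $(-\infty,1/\beta^2)$. Your explicit derivative bound $\partial_{t_2}M\geq 1/2$ is a slight sharpening of the paper's qualitative monotonicity argument, and your range discussion is more detailed, but the architecture is the same.
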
 

\begin{proof}
	The function $h(t_1,t_2):=1+g(\lambda(t_1,t_2))$ has range contained in $[1/2,3/2]$. 
	Hence $\Upsilon_{\beta}(\Pi) \subset \Pi$. Now suppose that $\Upsilon_{\beta}(t_1,t_2)=\Upsilon_{\beta}(a,b)$.
	Then $t_1=a$ and $t_2h(a,t_2)=b h(a,b)$. The function $h(a,t_2)$
	is strictly increasing with respect to $t_2$. Thus, $t_2>b$ implies that $t_2h(a,t_2)>b h(a,b)$.
	Consequently, $\Upsilon_{\beta}$ is injective. Let $(c,d)$ be a point in $\Pi$.
	Consider the  equation $\Upsilon_{\beta}(t_1,t_2)=(c,d)$, which is equivalent to the system of equations
	$t_1=c$, $t_2h(t_1,t_2)=d$. Thus, we have to prove that $t_2h(c,t_2)=d$ is solvable.
The function $t_2h(c,t_2)$ is bijective 
	from $(0,+\infty)$ into itself 	with respect to $t_2$. Therefore $t_2h(c,t_2)=d$ has a unique solution.
	That is, $\Upsilon_{\beta}:\Pi \rightarrow \Pi$ is surjective. 
	On the other hand, $\Upsilon_{\beta}$ is smooth, and so is $\Upsilon_{\beta}^{-1}$
	because of the Inverse Function Theorem.
	
	The correspondence (\ref{Phi-beta}) also defines $\Upsilon_{\beta}$ on $\mathbb{R} \times \{ 0\}$ by
	$\Upsilon_{\beta}(t_1,0)=(t_1,0)$.  Actually, $\Upsilon_{\beta}$ can be defined on
	$\overline{\Pi}\setminus (\Pi \cup \{ (-\infty,1/\beta^2) \} )$ according to the 
	following limits:
	\begin{eqnarray*}
	\lim_{(t_{1}, t_{2}) \rightarrow (t_1^0, 0)} \Upsilon_{\beta}(t_1,t_2)   &=& (t_1^0,0), \quad t_1^0 \in \overline{\R}, \\
	\lim_{(t_{1}, t_{2}) \rightarrow (+\infty, t_2^0)} \Upsilon_{\beta}(t_1,t_2) &=& (+\infty, \frac{3}{2} t_2^0), \quad t_2^0 \in \overline{\R}, \\
	\lim_{(t_{1}, t_{2}) \rightarrow (t_1^0, +\infty)} \Upsilon_{\beta}(t_1,t_2) &=& (t_1^0,+\infty), \quad t_1^0 \in \overline{\R}, \\
	\lim_{(t_{1}, t_{2}) \rightarrow (-\infty,t_2^0)} \Upsilon_{\beta}(t_1,t_2) &=& (-\infty, \frac{1}{2} t_2^0), \quad 0<t_2^0<1/\beta^2, \\
	\lim_{(t_{1}, t_{2}) \rightarrow (-\infty,t_2^0)} \Upsilon_{\beta}(t_1,t_2) &=& (-\infty, \frac{3}{2} t_2^0), \quad t_2^0>1/\beta^2.
	\end{eqnarray*}
We will justify just the last limit. For $t_1<0$,
$$\lambda(t_1,t_2)=|t_1| \left[ -1+ \beta \sqrt{ t_2 \left( 1+\frac{1}{t_1^2} \right) } \right].$$
If $t_2^0>1/\beta^2$ and $t_2$ is close enough to $t_2^0$, then $\lambda(t_1,t_2)$ tends to $+\infty$ when $t_1$ tends to $-\infty$.
Thus, $\lim\limits_{(t_1,t_2) \rightarrow (-\infty,t_2^0)} g(t_1,t_2)=1/2$. Consequently, 
$$\lim\limits_{(t_1,t_2) \rightarrow (-\infty,t_2^0)} t_2h(t_1,t_2)=\frac{3}{2} t_2^0,$$
that is, $\Upsilon_{\beta}(t_1,t_2) $ converges to $(-\infty, 3t_2^0/2)$ when $(t_1,t_2)$ tends to $ (-\infty,t_2^0)$.
A local analysis proves that $\Upsilon_{\beta}^{-1}$ is continuous.
This completes the proof.
\end{proof}

\begin{lem}\label{cont-phi-phi-beta}
The function $\phi^{\chi_{\beta}} \circ \Upsilon_{\beta}^{-1}$ is continuous on $\overline{\Pi}$ and 
it separates the points in the line segment $\{-\infty\} \times J_{\beta}$.
For each  continuous function $\phi:\overline{\Pi} \rightarrow \C$,  $\phi \circ \Upsilon_{\beta}^{-1}$  is also 
continuous on $\overline{\Pi}$ and has constant value along $\{-\infty\} \times J_{\beta}$.
\end{lem}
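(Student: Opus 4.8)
The plan is to reduce everything to the factorization $\phi^{\chi_\beta}=\varphi\circ\lambda$, where $\varphi$ is the function in \eqref{part-espectral} and $\lambda(t_1,t_2)=t_1+\beta\sqrt{t_2(t_1^2+1)}$; this is precisely the content of the displayed formula for $\phi^{\chi_\beta}$ preceding the statement. By Lemma~\ref{Phi-2} the map $\Upsilon_\beta^{-1}$ is continuous on $\overline{\Pi}\setminus(\{-\infty\}\times J_\beta)$ with values in $\overline{\Pi}\setminus\{(-\infty,1/\beta^2)\}$, and by Lemmas~\ref{lim-gamma-x0-0}, \ref{lim-phi-infty-0}, \ref{lim-phi-infty-t0} and \ref{lim-phi-t0-infty} the function $\phi^{\chi_\beta}$ is continuous on $\overline{\Pi}\setminus\{(-\infty,1/\beta^2)\}$. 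Hence $\phi^{\chi_\beta}\circ\Upsilon_\beta^{-1}$ is continuous on $\overline{\Pi}\setminus(\{-\infty\}\times J_\beta)$ for free, and the only genuine task is to extend it continuously across the segment $\{-\infty\}\times J_\beta$ and to evaluate it there.

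To do this I would use the implicit description of $\Upsilon_\beta^{-1}$. Writing $p=(t_1,t_2)$, $q=\Upsilon_\beta^{-1}(p)=(t_1,s_2)$ and $\mu:=\lambda(q)$, the identity $\Upsilon_\beta(q)=p$ together with $s_2=(\mu-t_1)^2/[\beta^2(t_1^2+1)]$ yields
\begin{equation}\label{aux-implicit}
t_2=\frac{(\mu-t_1)^2}{\beta^2(t_1^2+1)}\,\bigl(1+g(\mu)\bigr),\qquad \phi^{\chi_\beta}\bigl(\Upsilon_\beta^{-1}(p)\bigr)=\varphi(\mu).
\end{equation}
The crux, and what I expect to be the main obstacle, is to show that as $p\to(-\infty,y_0)$ with $y_0\in J_\beta$ one has $\mu\to\mu_0:=g^{-1}(\beta^2y_0-1)$. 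I would argue by compactness of $\overline{\R}$: along any subsequence with $\mu\to\mu^\ast\in\overline{\R}$, pass to the limit in \eqref{aux-implicit} using $t_1\to-\infty$, $t_2\to y_0$. If $\mu^\ast$ is finite then $(\mu-t_1)^2/(t_1^2+1)\to1$, so \eqref{aux-implicit} forces $\beta^2y_0=1+g(\mu^\ast)$, whence $g(\mu^\ast)=g(\mu_0)$ and $\mu^\ast=\mu_0$ by injectivity of $g$. The two infinite cases are settled by the elementary estimates $(\mu-t_1)^2/(t_1^2+1)\ge t_1^2/(t_1^2+1)$ once $\mu\ge0$ and $(\mu-t_1)^2/(t_1^2+1)<1$ once $t_1<\mu<0$: the first forces $y_0\ge 3/(2\beta^2)$, hence $y_0=1.5/\beta^2$ and $\mu_0=g^{-1}(1/2)=+\infty$, matching $\mu^\ast=+\infty$; the second forces $y_0\le 1/(2\beta^2)$, hence $y_0=0.5/\beta^2$ and $\mu_0=-\infty$, matching $\mu^\ast=-\infty$. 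Thus $\mu^\ast=\mu_0$ in every case, so $\mu\to\mu_0$.

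Granting this, $\phi^{\chi_\beta}(\Upsilon_\beta^{-1}(p))=\varphi(\mu)\to\varphi(\mu_0)$, so the continuous extension must take the value $\varphi\bigl(g^{-1}(\beta^2y_0-1)\bigr)$ at $(-\infty,y_0)$. This value is continuous in $y_0$ and, being the composition of the increasing affine map $y_0\mapsto\beta^2y_0-1$ of $J_\beta$ onto $[-1/2,1/2]$, the increasing $g^{-1}$, and the strictly decreasing $\varphi$ (note $\varphi$ is strictly decreasing since $(h_{j-1})^2$ vanishes only at finitely many points), it is strictly monotone in $y_0$ and hence injective; this both supplies joint continuity on $\overline{\Pi}$ and shows that $\phi^{\chi_\beta}\circ\Upsilon_\beta^{-1}$ separates the points of $\{-\infty\}\times J_\beta$.

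For the final assertion I would observe that the same limit computation gives, in every case, $s_2=t_2/(1+g(\mu))\to 1/\beta^2$ as $p\to(-\infty,y_0)$, independently of $y_0\in J_\beta$. Hence $\Upsilon_\beta^{-1}(p)\to(-\infty,1/\beta^2)$ for every $y_0\in J_\beta$, so $\Upsilon_\beta^{-1}$ extends to a continuous map $\overline{\Pi}\to\overline{\Pi}$ that collapses the whole segment $\{-\infty\}\times J_\beta$ onto the single point $(-\infty,1/\beta^2)$. Consequently, for any continuous $\phi:\overline{\Pi}\to\C$, the composition $\phi\circ\Upsilon_\beta^{-1}$ is continuous on $\overline{\Pi}$ and is constantly equal to $\phi(-\infty,1/\beta^2)$ along $\{-\infty\}\times J_\beta$. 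The contrast between the two assertions is exactly that $\phi^{\chi_\beta}$ is singular at $(-\infty,1/\beta^2)$, so its pullback records the direction of approach through $\mu_0(y_0)$ and is non-constant on the segment, whereas a genuinely continuous $\phi$ cannot see that direction and must be constant there.
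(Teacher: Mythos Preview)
The paper states this lemma without proof, so there is no argument to compare against; your proposal fills that gap, and it is correct. The factorization $\phi^{\chi_\beta}=\varphi\circ\lambda$ together with the implicit relation $t_2=\beta^{-2}(t_1^2+1)^{-1}(\mu-t_1)^2(1+g(\mu))$ is exactly what the construction of $\Upsilon_\beta$ in Lemma~\ref{Phi-2} is engineered to produce, and your compactness argument on $\mu\in\overline{\R}$ handling the three cases $\mu^\ast\in\R$, $\mu^\ast=+\infty$, $\mu^\ast=-\infty$ is the natural way to carry it out. The resulting boundary value $\varphi\bigl(g^{-1}(\beta^2 y_0-1)\bigr)$ is strictly monotone in $y_0$ for the reason you give (strict monotonicity of $\varphi$), which simultaneously yields continuity along $\{-\infty\}\times J_\beta$ and the separation claim; and your computation $s_2=t_2/(1+g(\mu))\to y_0/(\beta^2 y_0)=1/\beta^2$ cleanly shows the collapse of the segment under $\Upsilon_\beta^{-1}$, which is precisely the mechanism behind the second assertion.

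One small remark: when you pass from ``the limit exists at every boundary point'' to ``joint continuity on $\overline{\Pi}$'', you are invoking the standard fact that a continuous function on the dense subset $\Pi$ of the compact metrizable space $\overline{\Pi}$ extends continuously whenever all boundary limits exist. This is routine, but it would not hurt to say it once explicitly, since your limits are computed only along sequences coming from $\Pi$.
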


With a discontinuity $\beta_1=\beta$, we define $\Phi_1=\Upsilon_{\beta}$.
Introduce another point of discontinuity $\beta_2$, with $\beta_1<\beta_2$. 
Let $P_2=(-\infty, 0.5/\beta_2^2)$.
The function $\phi^{\chi_{\beta_2}} \circ \Phi_1^{-1}$ has a continuous extension to
$\overline{\Pi} \setminus \{P_2\}$, and its level curves 
$\Phi_1\left(t_1, \frac{(t_1-\lambda)^2}{\beta_2^2(t_1^2+1)}\right)$, $t_1>\lambda$, converge to $P_2$. 
As in Lemma \ref{Phi-2}, we can construct a mapping  $\Phi_{2}$ that separates all these level curves, and  
$\phi^{\chi_{\beta_2}} \circ \Phi_1^{-1} \circ \Phi_2^{-1}$ is continuous on  $\overline{\Pi}$. 
Adding more discontinuity points $\beta_3,...,\beta_m$, with $\beta_j < \beta_{j+1}$, we can construct mappings $\Phi_3,..., \Phi_m$ in such a way that $\phi^{\chi_{\beta_m}} \circ \Phi_{1}^{-1} \circ \cdots \circ \Phi_{m}^{-1}$ is continuous on  $\overline{\Pi}$.

Let ${\cal T}_{S_m}$ denote the $C^*$-algebra generated by all Toeplitz operators $T_a$
with nilpotent symbols $a(\text{Im}\, \zeta_1) \in PC(\overline{\R},S_m)$,
where $S_m=\{\beta_0,...,\beta_m \}$. For simplicity in our explanation, we assume that $\beta_0=0$
and $S_{m-1} \subset S_m \subset \R_+\cup \{0 \}$. We will explain how the Toeplitz algebra 
${\cal T}_{S_m}$ increases as $S_m$ does. By Theorem \ref{Topelitz-0}, ${\cal T}_{S_0}$ is isomorphic to
$C(\overline{\Pi})$, where the isomorphism is given on the generators by the rule
$$\Psi_0: {\cal T}_{S_0} \ni T_a \mapsto \phi^a \in C(\overline{\Pi}).$$
Consider now the  algebra ${\cal T}_{S_1}$, where $S_1=\{0,\beta_1 \}$.
By Lemma \ref{cont-phi-phi-beta}, $\phi^a \circ \Phi_{1}^{-1}$ is continuous on $\overline{\Pi}$
for every $a\in PC(\overline{\R},S_1)$. Then the algebra ${\cal T}_{S_1}$ is also
isomorphic and isometric to $C(\overline{\Pi})$, where the isomorphism is given by
$$\Psi_1: {\cal T}_{S_1} \ni T_a \mapsto \phi^a\circ \Phi_{1}^{-1} \in C(\overline{\Pi}).$$
At first sight, both algebras ${\cal T}_{S_0}$ and ${\cal T}_{S_1}$ seem to have the same spectrum $\overline{\Pi}$ 
but they do not, they are identified with $C(\overline{\Pi})$ through different isomorphisms. 
Of course, ${\cal T}_{S_0}$ is a subalgebra of ${\cal T}_{S_1}$. 
If $T_a \in {\cal T}_{S_0}$, then $\phi^a\circ \Phi_{1}^{-1} \in C(\overline{\Pi})$ and has
constant value along the line segment $I_1=\{-\infty\} \times J_{\beta_1}$.
According to the isomorphism $\Psi_1$, we can say that the spectrum of ${\cal T}_{S_1}$
equals $\overline{\Pi}$ meanwhile the spectrum of ${\cal T}_{S_1}$ is the quotient
space $\overline{\Pi}/I_{1}$. This phenomenon persists as long as the set  $S_m$ grows.

\begin{thm}\label{Topelitz-m}
	Let $S_m=\{0,\beta_1,...,\beta_m \} \subset \R_+$.
	Then there exist bijective continuous functions $\Phi_j:\Pi \rightarrow \Pi$, $j=1,...,m$, such that
	$\phi^a \circ \Phi_{1}^{-1} \circ \cdots \circ \Phi_{m}^{-1}$ admits a continuous extension to
	$\overline{\Pi}$ for each $a(\im \zeta_1) \in PC(\overline{\R}, S_m)$.
	The $C^*$ algebra generated by all Toeplitz operators $T_a$ 
	is isomorphic and isometric  to $C(\overline{\Pi})$. The isomorphism is defined on the generators by the rule
	$$\Psi_m :  T_a \mapsto \phi^a \circ \Phi_{1}^{-1} \circ \cdots \circ \Phi_{m}^{-1}.$$
\end{thm}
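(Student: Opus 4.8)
The plan is to prove both assertions by induction on $m$, the number of jump points beyond $\beta_0=0$, with the base case $m=0$ furnished by Theorem \ref{Topelitz-0} and the inductive step built from Lemmas \ref{Phi-2} and \ref{cont-phi-phi-beta} together with the explicit iterated construction of $\Phi_1,\dots,\Phi_m$ described just before the statement. Suppose the homeomorphisms $\Phi_1,\dots,\Phi_{m-1}$ of $\Pi$ have already been produced so that $\phi^a\circ\Phi_1^{-1}\circ\cdots\circ\Phi_{m-1}^{-1}$ extends continuously to $\overline{\Pi}$ for every $a\in PC(\overline{\R},S_{m-1})$, and so that $\Psi_{m-1}$ is an isometric $*$-isomorphism of ${\cal T}_{S_{m-1}}$ onto $C(\overline{\Pi})$. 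To incorporate the extra jump point $\beta_m$ I would first reduce to a single new generator: for $a\in PC(\overline{\R},S_m)$ write $a=\tilde a+c\,\chi_{\beta_m}$, where $c$ is the jump of $a$ at $\beta_m$ and $\tilde a\in PC(\overline{\R},S_{m-1})$ has only a removable discontinuity there. Since $a\mapsto\gamma^a$, hence $a\mapsto\phi^a$, is linear, $\phi^a=\phi^{\tilde a}+c\,\phi^{\chi_{\beta_m}}$, so by the inductive hypothesis it suffices to make $\phi^{\chi_{\beta_m}}\circ\Phi_1^{-1}\circ\cdots\circ\Phi_{m-1}^{-1}$ continuous on $\overline{\Pi}$.

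Second, I would construct $\Phi_m$. Transporting the bad point $(-\infty,1/\beta_m^2)$ of $\phi^{\chi_{\beta_m}}$ through the earlier maps yields a single point $P_m$, and the level curves $t_2=(t_1-\lambda)^2/(\beta_m^2(t_1^2+1))$ accumulate at $P_m$; applying the construction of Lemma \ref{Phi-2} with $\beta_m$ in place of $\beta$ produces a homeomorphism $\Phi_m$ of $\Pi$ that blows $P_m$ up into a segment $I_m$ separating these curves, so that $\phi^{\chi_{\beta_m}}\circ\Phi_1^{-1}\circ\cdots\circ\Phi_m^{-1}$ is continuous on $\overline{\Pi}$ by Lemma \ref{cont-phi-phi-beta}. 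The second half of that lemma shows that the already continuous functions $\phi^{\tilde a}\circ\Phi_1^{-1}\circ\cdots\circ\Phi_{m-1}^{-1}$ stay continuous after the harmless extra composition with $\Phi_m^{-1}$ and are constant along $I_m$. This establishes the continuous-extension assertion and defines $\Psi_m$ on the generators.

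For the algebra statement I would argue as follows. By Theorem \ref{equiv-unitaria} every $T_a$ is unitarily equivalent to multiplication by $\gamma^a$, and since $R_{(L)}R_{(L)}^*=I$ one has $R_{(L)}T_aT_bR_{(L)}^*=\gamma^a\gamma^b I$; thus ${\cal T}_{S_m}$ is commutative and isometrically $*$-isomorphic to the $C^*$-algebra of functions generated by $\{\gamma^a\}$. Composing with the fixed homeomorphism $\Phi$ of Section \ref{Alg-a} and then with $\Phi_1^{-1}\circ\cdots\circ\Phi_m^{-1}$ turns this into the $C^*$-subalgebra of $C(\overline{\Pi})$ generated by $\{\phi^a\circ\Phi_1^{-1}\circ\cdots\circ\Phi_m^{-1}\}$. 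This subalgebra is unital (take $a$ constant) and closed under conjugation ($\overline{\phi^a}=\phi^{\bar a}$), so by the Stone--Weierstrass theorem it equals $C(\overline{\Pi})$ once I verify that these functions separate the points of $\overline{\Pi}$. Separation in the interior $\Pi$ comes from Lemma \ref{separar2}; the continuous symbols $a_1,a_2$ of Lemma \ref{separar1} separate the boundary points arising from $C(\overline{\R})$ symbols; the generator $\phi^{\chi_+}=\varphi(t_1)$ resolves the former identification along $\overline{\R}\times\{+\infty\}$; and for each $j$ the function $\phi^{\chi_{\beta_j}}\circ\Phi_1^{-1}\circ\cdots\circ\Phi_j^{-1}$ separates the points of the new segment $I_j$ by Lemma \ref{cont-phi-phi-beta}. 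Injectivity of $\Psi_m$ then makes it automatically isometric, closing the induction.

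The hardest part, I expect, is the bookkeeping in the iterated construction rather than any single continuity estimate. One must verify that processing the jump points in increasing order $\beta_1<\cdots<\beta_m$ is exactly what forces the bad point $(-\infty,1/\beta_j^2)$ of the $j$-th generator, after transport through $\Phi_1^{-1}\circ\cdots\circ\Phi_{j-1}^{-1}$, to land in the lower region where each earlier map behaves like $t_2\mapsto t_2/2$, so that its level curves collapse to a single point $P_j$; and that the segment $I_j$ created by $\Phi_j$ is disjoint from $I_1,\dots,I_{j-1}$, so that the continuity and separation gained at earlier stages are not destroyed. Controlling this nesting and the disjointness of the successively blown-up segments is the genuine obstacle.
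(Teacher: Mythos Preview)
Your proposal is correct and follows essentially the same approach as the paper. In fact, the paper does not supply a formal proof block for this theorem at all; it relies on the discussion immediately preceding the statement---the iterative construction of $\Phi_1,\dots,\Phi_m$ via Lemma~\ref{Phi-2} and Lemma~\ref{cont-phi-phi-beta}, together with the base case Theorem~\ref{Topelitz-0}---and your induction makes that implicit argument explicit, including the Stone--Weierstrass separation check and the bookkeeping on the segments $I_j$ that the paper leaves to the reader.
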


Note that for each piece-wise continuous symbol $a(\im \zeta_1) \in PC(\overline{\R}, S_m)$, in general 
the spectral function $\gamma^a:\Pi \rightarrow \mathbb{C}$ 
does not admit a continuous extension to $\overline{\Pi}$ but 
$\gamma^a \circ \Phi^{-1} \circ \Phi_{1}^{-1} \circ \cdots \circ \Phi_{m}^{-1}$
does, which means that $\gamma^a$ is uniformly continuous with respect to 
a new metric on $\Pi$, this metric is the
pushforward of the usual metric  using  the mapping $\Phi^{-1} \circ \Phi_{1}^{-1} \circ \cdots \circ \Phi_{m}^{-1}$.

\section{Toeplitz operators with symbols $a(\im\zeta_1) b(\im \zeta_2 - |\zeta_1|^2)$.}\label{T-ab}

In this section,  we describe the $C^*$-algebra generated by all Toeplitz operators
with symbols of the form
$c(\zeta_1,\zeta_2)=a(\im\zeta_1) b(\im \zeta_2 - |\zeta_1|^2)$,
where $a(s) \in C(\overline{\R})$, and $b(t)\in L^{\infty}(\R_+)$ has limits
values at $t=0,+\infty$. For such a symbol $c$ we have
that $\gamma^c=\gamma^a \gamma^b$, which means that $T_c=T_a T_b=T_b T_a$.
Although $\gamma^b$ belongs to $C(\overline{\Pi})$, the spectral function 
$$(\gamma^b \circ \Phi^{-1})(t_1,t_2)
=\int_{\R_{+}} b\left(\frac{y_2}{2(t_1^2+1)t_2} \right) (\ell_{k-1}(y_2))^2 dy_2$$
is continuous on $\overline{\Pi}\setminus \{P_-,P_+\}$, where
$P_-=(-\infty,0)$ and $P_+=(+\infty,0)$. Since the level curves of $\gamma^b(x_1,x_2)$ are the 
horizontal lines $x_2=\mu$, the level curves of $\gamma^b \circ \Phi^{-1}$
are given by the equations $t_2=\mu/(t_1^2+1)$, with $\mu\in \R_+$.

\begin{lem}
	Let $f: [0,+\infty] \rightarrow [0,1]$ be any bijective, smooth and increasing function.
	Then the function 
	\begin{equation}\label{Psi}
	\Theta(t_1,t_2)=\left(t_1,t_2+\frac{t_1^2}{t_1^2+1}f(t_2[t_1^2+1])\right),
	\end{equation}
	is an  homeomorphism from $\Pi$ onto itself, which can be continuously extended to
	$\overline{\Pi} \setminus \{(P_-,P_+\}$ with range 
	$\overline{\Pi}\setminus I_{\infty}$, where 
	$I_{\infty}=\{ (\tau_1,\tau_2) : \ \tau_1=\pm\infty \ \text{and} \   0 \leq \tau_2 \leq 1 \}$.
	We have 
	$\Theta(\pm \infty,t_2)=(\pm\infty,t_2+1)$ for $0<t_2< +\infty,$
	and $\Theta$ acts like the identity mapping at the rest of points in 
	$\overline{\Pi}\setminus (\Pi \cup \{P_-,P_+\})$. 
\end{lem}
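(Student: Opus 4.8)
The plan is to mirror, almost line for line, the proof of Lemma~\ref{Phi-2}. Write $\Theta(t_1,t_2)=(t_1,g(t_1,t_2))$ with $g(t_1,t_2)=t_2+\frac{t_1^2}{t_1^2+1}\,f\!\left(t_2(t_1^2+1)\right)$, and note first that bijectivity and monotonicity of $f$ force $f(0)=0$ and $f(+\infty)=1$. Since $f\geq 0$ and $\frac{t_1^2}{t_1^2+1}<1$, the added term is nonnegative and bounded, so $g(t_1,t_2)>t_2>0$ and $\Theta(\Pi)\subset\Pi$. For injectivity I would fix $t_1=a$ and observe that $g(a,\cdot)$ is the sum of the increasing function $t_2$ and the increasing function $\frac{a^2}{a^2+1}f(t_2(a^2+1))$, hence strictly increasing; since $\Theta$ preserves the first coordinate this gives injectivity. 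For surjectivity, the same one-variable function satisfies $g(a,0^+)=0$ and $g(a,+\infty)=+\infty$, so by the intermediate value theorem $t_2\mapsto g(a,t_2)$ is a bijection of $(0,+\infty)$ onto itself and $\Theta(t_1,t_2)=(c,d)$ has a unique solution. Smoothness of $\Theta$ and $\Theta^{-1}$ then follows from the Inverse Function Theorem, the Jacobian being lower triangular with determinant $\partial g/\partial t_2 = 1+t_1^2 f'(t_2(t_1^2+1))\geq 1>0$.

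Next I would compute the boundary limits stratum by stratum on $\overline{\Pi}\setminus(\Pi\cup\{P_-,P_+\})$. Approaching a finite bottom point $(t_1^0,0)$ the argument $t_2(t_1^2+1)\to 0$, so $f\to f(0)=0$ and $g\to 0$, i.e. $\Theta$ is the identity there. Approaching a side point $(\pm\infty,t_2^0)$ with $0<t_2^0<+\infty$ one has $t_2(t_1^2+1)\to+\infty$ and $\frac{t_1^2}{t_1^2+1}\to 1$, whence $g\to t_2^0+1$, giving $\Theta(\pm\infty,t_2^0)=(\pm\infty,t_2^0+1)$. Approaching a top point $(t_1^0,+\infty)$ the summand $t_2$ dominates and $g\to+\infty$, so again $\Theta$ acts as the identity. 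The step I expect to be the crux is the \emph{necessity} of deleting $P_\pm=(\pm\infty,0)$: along the parabolas $t_2=c/(t_1^2+1)$ the argument $t_2(t_1^2+1)\equiv c$ stays constant while $\frac{t_1^2}{t_1^2+1}\to 1$, so $g\to f(c)$, a value that genuinely depends on $c$. Hence no single limit exists at $P_\pm$, which is exactly why the extension is only defined on $\overline{\Pi}\setminus\{P_-,P_+\}$; I would isolate this computation as the heart of the argument, since it also dictates the shape of the excluded set $I_\infty$.

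Finally I would assemble the range. The interior maps onto $\Pi$; the finite bottom and the entire top map identically onto themselves; each side $\{\pm\infty\}\times(0,+\infty)$ maps via $t_2\mapsto t_2+1$ onto $\{\pm\infty\}\times(1,+\infty)$, and the corners $(\pm\infty,+\infty)$ are fixed. These images are pairwise disjoint and each restriction is injective, so the extension is a continuous bijection; its image avoids the two segments $\{\pm\infty\}\times[0,1]$ and covers everything else, giving range exactly $\overline{\Pi}\setminus I_\infty$. A local analysis near each boundary stratum, exactly as for $\Upsilon_\beta^{-1}$ in Lemma~\ref{Phi-2}, shows $\Theta^{-1}$ is continuous, so $\Theta$ is a homeomorphism onto $\overline{\Pi}\setminus I_\infty$. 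The only nonroutine points are the path-independence of the side limits and the demonstration that $P_\pm$ is a true obstruction; everything else is a transcription of the earlier lemma.
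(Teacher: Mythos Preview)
Your proposal is correct and is precisely the approach the paper intends: the paper's proof is the single line ``Similar to the proof of Lemma~\ref{Phi-2},'' and your argument is exactly that transcription, with the same fixed-first-coordinate monotonicity for bijectivity, the Inverse Function Theorem for smoothness, and the stratum-by-stratum boundary limits (including the parabola computation showing $P_\pm$ must be excluded). There is nothing to add.
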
 

\begin{proof}
	Similar to the proof of Lemma \ref{Phi-2}.
\end{proof}

The image of the level curve $t_2=\mu/(1+t_1^2)$ under $\Theta$ is the curve
$$\tau_2=\frac{\mu}{\tau_1^2+1} + f(\mu) \frac{\tau_1^2}{\tau_1^2+1}.$$
This means that the level curves of $\gamma^b \circ \Phi^{-1} \circ \Theta^{-1}$
do not converge to a single point anymore.

\begin{lem}\label{cont-gamma-b-psi}
	The function $\gamma^{b} \circ \Phi^{-1} \circ \Theta^{-1}$ is continuous on $\overline{\Pi}$,
	and for each continuous function $\phi:\overline{\Pi} \rightarrow \C$,  $\phi \circ \Theta^{-1}$  is also 
	continuous on $\overline{\Pi}$ and has constant value along each component of $I_{\infty}$.
\end{lem}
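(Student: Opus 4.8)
The plan is to read both assertions as statements about the behaviour of $\Theta^{-1}$ near the exceptional set $I_\infty$, since away from it everything is routine. By the preceding lemma, $\Theta$ restricts to a homeomorphism of $\overline{\Pi}\setminus\{P_-,P_+\}$ onto $\overline{\Pi}\setminus I_\infty$, so $\Theta^{-1}$ is continuous on $\overline{\Pi}\setminus I_\infty$; composing it with $\gamma^b\circ\Phi^{-1}$ (continuous on $\overline{\Pi}\setminus\{P_-,P_+\}$) and with $\phi$ (continuous on all of $\overline{\Pi}$) already gives continuity of $\gamma^b\circ\Phi^{-1}\circ\Theta^{-1}$ and of $\phi\circ\Theta^{-1}$ on $\overline{\Pi}\setminus I_\infty$. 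What must be produced in each case is a continuous extension across the two segments $\{\pm\infty\}\times[0,1]$ forming $I_\infty$.

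The common engine is a single limit computation. First I would write $(\tau_1,\tau_2)=\Theta(t_1,t_2)$ and set $\mu:=t_2(t_1^2+1)$, so that $t_2=\mu/(t_1^2+1)$ and, from (\ref{Psi}),
\[
\tau_2=\frac{\mu}{t_1^2+1}+\frac{t_1^2}{t_1^2+1}\,f(\mu),\qquad \tau_1=t_1.
\]
The claim to establish is that, as $(\tau_1,\tau_2)\to(\pm\infty,\tau_2^*)$ inside $\overline{\Pi}\setminus I_\infty$ with $\tau_2^*\in[0,1]$, one has $\mu\to f^{-1}(\tau_2^*)$ and $t_2\to 0$. I would argue by subsequences, using $1/(t_1^2+1)\to 0$ and $t_1^2/(t_1^2+1)\to 1$ together with compactness of $[0,+\infty]$. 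If $\tau_2^*<1$, then $\mu$ cannot escape to $+\infty$, for otherwise $f(\mu)\to 1$ and $\tau_2\geq \tfrac{t_1^2}{t_1^2+1}f(\mu)\to 1>\tau_2^*$; hence $\mu$ stays bounded and every convergent subsequence $\mu\to\mu_0$ yields $\tau_2\to f(\mu_0)$, which forces $\mu_0=f^{-1}(\tau_2^*)$ by injectivity of $f$, and then $t_2=\mu/(t_1^2+1)\to 0$. If $\tau_2^*=1$, then $\mu$ must run to $+\infty=f^{-1}(1)$ (a bounded subsequence would give $\tau_2\to f(\mu_0)<1$), and $t_2=\mu/(t_1^2+1)=\tau_2-\tfrac{t_1^2}{t_1^2+1}f(\mu)\to 1-1=0$. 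Thus $\mu\to f^{-1}(\tau_2^*)$ and $t_2\to 0$ in every case.

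With this in hand the two extensions follow. Since $(\gamma^b\circ\Phi^{-1})(t_1,t_2)=\gamma^b(\mu)$ and $\gamma^b$ is continuous on $\overline{\R}_+=[0,+\infty]$, the limit $\mu\to f^{-1}(\tau_2^*)$ gives
\[
\lim_{(\tau_1,\tau_2)\to(\pm\infty,\tau_2^*)}(\gamma^b\circ\Phi^{-1}\circ\Theta^{-1})(\tau_1,\tau_2)=\gamma^b\!\left(f^{-1}(\tau_2^*)\right).
\]
Defining the function on $I_\infty$ by this value makes it continuous on $\overline{\Pi}$, because $\tau_2^*\mapsto\gamma^b(f^{-1}(\tau_2^*))$ is a composition of continuous maps between compact intervals; the endpoints match the remaining boundary data, since $f(0)=0$ and $f(+\infty)=1$ force $\gamma^b(f^{-1}(0))=\gamma^b(0)=b_\infty$ and $\gamma^b(f^{-1}(1))=\gamma^b(+\infty)=b_0$, agreeing with the approach along $\overline{\R}\times\{0\}$ and along the lateral faces. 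This is precisely the sense in which the level curves are now spread along $I_\infty$ rather than collapsing to $P_\pm$. For the second assertion, $t_2\to 0$ means $\Theta^{-1}(\tau_1,\tau_2)\to(\pm\infty,0)=P_\pm$, so continuity of $\phi$ at $P_\pm$ yields $\phi\circ\Theta^{-1}\to\phi(P_\pm)$; hence $\phi\circ\Theta^{-1}$ extends continuously to $\overline{\Pi}$, taking the constant value $\phi(P_\pm)$ along the component $\{\pm\infty\}\times[0,1]$.

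The main obstacle, and the only genuinely delicate point, is controlling $\mu=t_2(t_1^2+1)$ in the limit: a priori $\mu$ could remain bounded away from $f^{-1}(\tau_2^*)$ or escape to $+\infty$ even though $t_2\to 0$, so one must rule out $\mu\to+\infty$ when $\tau_2^*<1$ and, conversely, force $\mu\to+\infty$ when $\tau_2^*=1$. This is exactly where the monotonicity of $f$ and its boundary normalization $f([0,+\infty])=[0,1]$ enter. Everything else is bookkeeping with the two weights $1/(t_1^2+1)$ and $t_1^2/(t_1^2+1)$, plus the verification, exactly as in Lemma~\ref{Phi-2}, that the limits from the other boundary faces of $\overline{\Pi}$ are consistent with the values just assigned along $I_\infty$.
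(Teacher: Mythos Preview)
The paper states this lemma without proof; the only justification it offers is the sentence preceding the lemma, which observes that the image under $\Theta$ of the level curve $t_2=\mu/(1+t_1^2)$ is $\tau_2=\mu/(\tau_1^2+1)+f(\mu)\,\tau_1^2/(\tau_1^2+1)$, so that the level curves of $\gamma^b\circ\Phi^{-1}\circ\Theta^{-1}$ no longer collapse to $P_\pm$. Your argument is correct and is precisely the rigorous version of that remark: you parametrize by the level variable $\mu=t_2(t_1^2+1)$, show via the case split on $\tau_2^*$ that $\mu\to f^{-1}(\tau_2^*)$ and $t_2\to 0$, and then read off both conclusions. The endpoint checks at $\tau_2^*=0,1$ are also correct and match the boundary values coming from $\overline{\R}\times\{0\}$ and from the lateral faces $\{\pm\infty\}\times(1,+\infty]$. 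In short, you have supplied the details the paper omits, along the line the level-curve observation suggests.
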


\begin{thm}
	The $C^*$ algebra generated by all Toeplitz operators $T_{ab}$, with $a(\im \zeta_1) \in PC(\overline{\R}, \{0\})$ and $b(t)\in L^{\infty}(\R_+)$ having limits
	values at $t=0,+\infty$, 
	is isomorphic and isometric  to $C(\overline{\Pi})$. The isomorphism is defined on the generators by the rule
	$$T_{ab} \mapsto \gamma^{ab} \circ \Phi^{-1} \circ \Theta^{-1}.$$
\end{thm}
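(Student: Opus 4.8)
The plan is to deduce the statement from the Stone--Weierstrass theorem after transporting the Toeplitz algebra into $C(\overline{\Pi})$. By Theorem~\ref{equiv-unitaria} the correspondence $T_c\mapsto\gamma^c$ is an isometric $*$-isomorphism of the Toeplitz algebra onto the algebra of multiplication operators generated by the spectral functions; since $T_{ab}=T_aT_b=T_bT_a$ we have $\gamma^{ab}=\gamma^a\gamma^b$, and precomposition with the bijections $\Phi^{-1}$ and $\Theta^{-1}$ of $\Pi$ leaves the supremum norm unchanged, as does passing to the continuous extension on the dense subset $\Pi\subset\overline{\Pi}$. Thus it suffices to prove that the $C^*$-subalgebra of $C(\overline{\Pi})$ generated by the functions $\gamma^{ab}\circ\Phi^{-1}\circ\Theta^{-1}$, with $a\in PC(\overline{\R},\{0\})$ and $b$ as in the statement, is all of $C(\overline{\Pi})$.

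First I would verify that every generator lies in $C(\overline{\Pi})$. Writing
$$\gamma^{ab}\circ\Phi^{-1}\circ\Theta^{-1}=\bigl(\phi^a\circ\Theta^{-1}\bigr)\,\bigl(\gamma^b\circ\Phi^{-1}\circ\Theta^{-1}\bigr),$$
the second factor is continuous on $\overline{\Pi}$ by Lemma~\ref{cont-gamma-b-psi}; for the first, Theorem~\ref{Topelitz-0} gives $\phi^a\in C(\overline{\Pi})$, and then the second assertion of Lemma~\ref{cont-gamma-b-psi}, applied with $\phi=\phi^a$, shows $\phi^a\circ\Theta^{-1}\in C(\overline{\Pi})$. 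Their product is therefore continuous. The generating set contains the constants (take $a\equiv b\equiv1$) and is automatically closed under complex conjugation, so only separation of points remains.

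The heart of the argument is separation, which I would organize around the segment $I_\infty$. Taking $b\equiv1$ yields the functions $\phi^a\circ\Theta^{-1}$; since $\{\phi^a\}$ separates the points of $\overline{\Pi}$ by Theorem~\ref{Topelitz-0} and $\Theta^{-1}$ restricts to a homeomorphism of $\overline{\Pi}\setminus I_\infty$ onto $\overline{\Pi}\setminus\{P_-,P_+\}$, these functions separate any two points not lying in one and the same component of $I_\infty$: a point off $I_\infty$ is sent by $\Theta^{-1}$ to a point different from $P_\pm$, while the two components of $I_\infty$ are told apart by $\phi^a(P_-)\neq\phi^a(P_+)$ for a suitable $a$. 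The only pairs left are points inside one component, say $(-\infty,\tau_2)$ and $(-\infty,\tau_2')$ with $\tau_2\neq\tau_2'$. Here I take $a\equiv1$: by the level-curve computation preceding Lemma~\ref{cont-gamma-b-psi}, the function $\gamma^b\circ\Phi^{-1}\circ\Theta^{-1}$ takes at $(-\infty,\tau_2)$ the value $\gamma^b\bigl(f^{-1}(\tau_2)\bigr)$, where $f^{-1}\colon[0,1]\to[0,+\infty]$ inverts $f$. Since $f^{-1}(\tau_2)\neq f^{-1}(\tau_2')$ and the $C^*$-algebra generated by $\{\gamma^b\}$ equals the point-separating algebra $C[0,+\infty]$ by the main theorem of Section~\ref{seccion-sb}, the generated algebra separates these two points; the right component of $I_\infty$ is treated symmetrically.

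Combining the two families, the generated $C^*$-subalgebra separates the points of $\overline{\Pi}$, contains the constants and is conjugation-closed, so Stone--Weierstrass forces it to be dense, hence equal to $C(\overline{\Pi})$; the isometry was recorded at the outset, giving the asserted isomorphism $T_{ab}\mapsto\gamma^{ab}\circ\Phi^{-1}\circ\Theta^{-1}$. I expect the main obstacle to be the bookkeeping of boundary values along $I_\infty$: one must confirm that the continuous extension supplied by Lemma~\ref{cont-gamma-b-psi} attaches to $(-\infty,\tau_2)$ precisely the constant value $\gamma^b(f^{-1}(\tau_2))$ carried by the level curve limiting onto it, together with the symmetric statement on the right component, since the entire separation inside $I_\infty$ rests on this identification.
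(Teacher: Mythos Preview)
The paper states this theorem without proof, presumably because it follows the same Stone--Weierstrass template already used for Theorem~\ref{Topelitz-0} once Lemmas~\ref{cont-gamma-b-psi} and the level-curve computation are in place. Your argument supplies precisely those omitted details and is correct: the continuity of the generators on $\overline{\Pi}$ is exactly what Lemma~\ref{cont-gamma-b-psi} provides, separation off $I_\infty$ reduces to Theorem~\ref{Topelitz-0} via $\Theta^{-1}$, and your identification of the boundary value $\gamma^b\bigl(f^{-1}(\tau_2)\bigr)$ along $I_\infty$---which you rightly flag as the crux---follows from the image of the level curves under $\Theta$ displayed just before Lemma~\ref{cont-gamma-b-psi}.
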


For the Toeplitz $T_{\chi_\beta}$ with symbol 
$\chi_{\beta}=\chi_{[\beta/2,+\infty)}(\im \zeta_1)$,
there exists a mapping $\Theta_{\beta}:\Pi \rightarrow \Pi$ such that
$\gamma^{\chi_\beta} \circ \Phi^{-1} \circ \Theta^{-1} \circ \Theta_{\beta}^{-1}$
admits a continuous extension to $\overline{\Pi}$.
The construction of $\Theta_{\beta}$ is similar to the construction of $\Phi_{\beta}$
given in Lemma \ref{Phi-2}, where one has to take into account the level curves 
of the spectral function 
$\gamma^{\chi_\beta} \circ \Phi^{-1} \circ \Theta^{-1}$, which
converge to the point $(-\infty,1+1/\beta^2)$.

Implicitly, we have considered several compactifications of $\Pi$
associated to the $C^*$-algebras studied here in, each compactification depends on the kind of symbols.
Take $Q_-=(-\infty,+\infty)$ and $Q_+=(+\infty,+\infty)$,
let us explain the situation in the case of the algebra generated by the Toeplitz 
with symbols $a(\im \zeta_1) \in C(\overline{\R})$.
Essentially the corresponding compactification of $\Pi$ is obtained from
$\overline{\Pi}\setminus (\{Q-_,Q_+\})$ by gluing a line segment 
at each corner $Q_-$ and $Q_+$.
Each spectral function $\gamma^a(x_1,x_2)$ is continuous on 
$\overline{\Pi}\setminus (\{Q-_,Q_+\})$, and has limit values when $(x_1,x_2)$ moves 
along the parabolas $x_2=\mu+\mu x_1^2$ and tends to $Q_{\pm}$.
For a net $\{X_{\lambda}\}$ tending to $Q_+$, $\{\gamma^a(X_{\lambda})\}$
converges if $\{X_{\lambda}\}$ is eventually in gaps between two parabolas 
close enough from each other with respect to the parameter $\mu$.

\vspace{.4cm}
\noindent
{\bf Acknowledgments}

\vspace{.3cm}
\noindent
This work was supported by Universidad Veracruzana 
under P/PROFEXCE-2020-30MSU0940B-22 project, M\'exico.


\end{document}